\documentclass[a4paper,11pt]{elsarticle}

\usepackage[utf8]{inputenc}
\usepackage[english]{babel}
\usepackage{enumerate}
\usepackage{latexsym}
\usepackage{amsthm,amsmath}
\usepackage{amssymb}
\usepackage{aliascnt} 
\usepackage{multicol}
\usepackage{tikz}
\usepackage{float} 

\frenchspacing
\usepackage{hyperref}

\usetikzlibrary{babel,decorations.pathreplacing,calc,positioning,shapes.geometric}

\usepackage{cleveref}

\definecolor{10}{RGB}{115,59,171}
\definecolor{8}{RGB}{212,122,240}
\definecolor{7}{RGB}{99,212,119}
\definecolor{6}{RGB}{183,240,164}
\definecolor{D}{RGB}{255,162,79}
\definecolor{E}{RGB}{255,84,0}
\definecolor{F}{RGB}{158,248,255}
\definecolor{G}{RGB}{128,135,255}
\definecolor{I}{RGB}{187,255,0}
\definecolor{A}{cmyk}{.9,.05,.4,0}
\definecolor{B}{RGB}{150,30,150}
\definecolor{C}{RGB}{186,155,189}
\definecolor{9}{RGB}{0,180,60}
\definecolor{0}{RGB}{30,123,191}
\definecolor{1}{RGB}{255,113,102}
\definecolor{2}{RGB}{41,199,92}
\definecolor{3}{RGB}{242,207,16}
\definecolor{5}{RGB}{255,15,154}
\definecolor{4}{rgb}{.8,0,.8}

\definecolor{Red}{rgb}{1,0.4,0.4}
\definecolor{Green}{rgb}{.1,.5,.1}
\definecolor{Blue}{rgb}{.1,.1,.5}
\definecolor{blue}{RGB}{0,0,255}
\definecolor{Yellow}{rgb}{.8,.4,0}
\definecolor{X}{rgb}{.8,.4,0}
\definecolor{H}{rgb}{0,0,1}
\definecolor{light}{rgb}{.67,.84,.90}
\definecolor{Cyan}{rgb}{0,1,1}
\definecolor{Purple}{rgb}{.5,0,.5}
\definecolor{Purple2}{rgb}{.5,.2,.5}
\definecolor{white}{rgb}{1.0,1.0,1.0}
\definecolor{Purple2}{rgb}{.8,.4,0}
\definecolor{Amarillo}{RGB}{225,191,73}
\definecolor{Celeste}{RGB}{117,170,219}
\definecolor{Castano}{RGB}{232,53,17}
\definecolor{Black}{RGB}{0,0,0}
\definecolor{White}{RGB}{255,255,255}
\definecolor{gris}{rgb}{.5,.5,.5}

\newtheorem{theorem}{Theorem}[section]
\newaliascnt{corollary}{theorem}
\newtheorem{corollary}[corollary]{Corollary}
\aliascntresetthe{corollary}
\newaliascnt{lemma}{theorem}
\newtheorem{lemma}[lemma]{Lemma}
\aliascntresetthe{lemma}
\newaliascnt{proposition}{theorem}
\newtheorem{proposition}[proposition]{Proposition}
\aliascntresetthe{proposition}


\theoremstyle{definition}

\DeclareMathOperator{\im}{\mathcal{R}}
\DeclareMathOperator{\nulo}{\mathcal{N}}
\DeclareMathOperator{\unovec}{1\!\!1}
\DeclareMathOperator{\Supp}{Supp}
\DeclareMathOperator{\nul}{nl}
\DeclareMathOperator{\rank}{rk}
\DeclareMathOperator{\Int}{Int}
\DeclareMathOperator{\adj}{adj}

\newcommand{\cliqueker}[1]{\operatorname{Cker}{\left(#1\right)}}

\pgfdeclarelayer{background2}
\pgfdeclarelayer{background}
\pgfdeclarelayer{foreground}
\pgfsetlayers{background2,background,main,foreground}

%
%
%

%
%
%
\begin{document}
	
\begin{abstract}
	We study the nullspace of the adjacency matrix of split graphs, whose vertex set can be partitioned into a clique and an independent set. We introduce the \emph{clique-kernel}, a subspace that decides whether clique vertices lie in the support of a kernel eigenvector, and we prove that its dimension is at most one.  This yields the formula
	\[
	\nul(Sp) = \nul(R) + \dim(\cliqueker{Sp}),
	\]
	which fully describes the nullity of a split graph in terms of its biadjacency submatrix \(R\).
	We also analyze unbalanced split graphs through the concept of swing vertices and characterize the structure of their kernel supports. 
	Furthermore, we study the behavior of the nullspace under Tyshkevich composition and derive a closed formula for the determinant.
\end{abstract}

	\begin{keyword}
		Split graph\sep
		nullspace\sep
		adjacency matrix\sep
		Tyshkevich composition\sep 
		determinant
		\MSC[2020] 15A09, 05C38
	\end{keyword}

	\begin{frontmatter}
		\author[daj,daj2]{Daniel A. Jaume}
		\ead{djaume@unsl.edu.ar}
		
		\author[daj2]{Victor N. Schv\"{o}llner}
		\ead{vnsi9m6@gmail.com}

		\author[daj]{Cristian Panelo}
		\ead{crpanelo@unsl.edu.ar}
		
		\author[daj,daj2]{Kevin Pereyra}
		\ead{kpereyra@unsl.edu.ar}
		
		\title{On the nullspace of split graphs}

		\address[daj]{Universidad Nacional de San Luis. Argentina}
		\address[daj2]{Instituto de Matem\'{a}ticas Aplicadas de San Luis -- CONICET. Argentina}
		

	\end{frontmatter}

	
\section{Introduction}

The study of spectral properties of graphs, particularly the nullspace of the adjacency matrix, has emerged as a fundamental area in algebraic graph theory with significant implications for both theoretical mathematics and applied sciences. A graph $G$ is said to be \emph{singular} if its adjacency matrix $A(G)$ is singular, that is, if the dimension of the nullspace of $A(G)$ is at least one. The \emph{nullity} $\nul(G)$ of a graph $G$ is the multiplicity of zero as an eigenvalue of $A(G)$ or, equivalently, the dimension of the nullspace of $A(G)$. Understanding the structure of singular graphs and characterizing their nullspace has been a subject of intensive investigation since 1957 when von Collatz and Sinogowitz posed the problem, see \cite{von1957spektren}.

In her seminal work \cite{sciriha2007characterization} Sciriha introduced the notion of core vertices (those vertices whose entries in a kernel eigenvector are nonzero) and laid the foundations for the study of singular graphs. This approach led to a systematic characterization of singular graphs in terms of minimal configurations and core structures. Subsequently, numerous researchers have contributed to this framework, including studies on nut graphs (core graphs of nullity one) \cite{sciriha1998construction}, maximal core sizes \cite{sciriha2009maximal}, and the relationship between core and Fiedler vertices \cite{ali2016coalescing}. Jaume and Molina, see \cite{jaume2018null}, presented a decomposition of trees in terms of the supported vertices, which turn out to be equivalent to the classical Edmond-Gallai decomposition, see  \cite{jaume2021null}.

Graphs whose vertex set can be partitioned into a clique and an independent set were first singled out by F\"oldes and Hammer in 1977, 
(see \cite{FH77b} and \cite{FH77a}), under the name \emph{split graphs}. The same authors had earlier studied them implicitly as the graphs that do \emph{not} contain an induced copy of $2K_{2}$, $C_{4}$ or $C_{5}$ (\cite{FH76}).  
The term ``split'' was coined to stress that the vertex set is split into two sets whose induced subgraphs are as dissimilar as possible.  
Subsequent monographs of Golumbic \cite{golumbic2004algorithmic} and Mahadev--Peled \cite{mahadev1995threshold} embedded the class into the wider hierarchy of perfect graphs and made the notion standard in graph theory. Many NP-hard problems that are intractable for general graphs admit efficient algorithms when restricted to split graphs \cite{hammer1981splittance}. Moreover, split graphs can be recognized in linear time, and classical problems such as finding maximum cliques and maximum independent sets can be solved efficiently on this class \cite{foldes1977split}.

The spectral properties of split graphs have received considerable attention. Threshold graphs, which form a proper subclass of split graphs characterized by the absence of induced $P_4$, $C_4$, and $2K_2$ \cite{mahadev1995threshold}, have been extensively studied from a spectral perspective. Sciriha \cite{sciriha2011spectrum} investigated the spectrum of threshold graphs and established that singular threshold graphs possess kernel eigenvectors with at most two nonzero entries. Recent work by Panelo, Jaume, and Machado Toledo \cite{panelo5316214core} on the core-nilpotent decomposition of threshold graphs further illuminates the algebraic structure of these important graph families.

Despite the rich literature on both singular graphs and split graphs, a comprehensive study of the nullspace structure specifically for split graphs has been lacking. The block structure of the adjacency matrix of a split graph $Sp$,
\[
A(Sp) = \begin{pmatrix}
	J - I & R \\
	R^t & 0
\end{pmatrix},
\]
where $J$ is the all-ones matrix, $I$ is the identity matrix, and $R$ encodes adjacencies between the clique $K$ and independent set $S$, naturally suggests a systematic approach to understanding the nullspace of $A(Sp)$. This structure allows us to decompose the nullspace problem into more manageable components and to establish clear relationships between the rank properties of $R$ and the nullity of $Sp$.

In this paper, we provide a thorough investigation of the nullspace of split graphs, establishing several new results that characterize when a split graph is singular and describing the structure of its kernel eigenvectors. Our main contributions include:

\begin{enumerate}
	\item A complete characterization of the support of kernel eigenvectors (\Cref{sec:nullspace_split}), showing that for many split graphs, the support lies entirely within the independent set $S$. We establish necessary and sufficient conditions for when clique vertices can appear in the support.
	
	\item A detailed analysis of \emph{unbalanced} split graphs (\Cref{sec:unbalanced}), i.e., those whose s-partition is not unique. We prove that the nullspace structure depends critically on whether the set of swing vertices (those which can belong to either part of an s-partition) forms a clique or an independent set. 
	
	\item For split graphs with supported clique vertices (\Cref{sec:supp.clique}), we prove that the nullity equals $\nul(R) + 1$ when the support intersects the clique non-trivially, and we provide explicit formulas relating kernel eigenvectors to the adjugate matrix when $\nul(Sp) = 1$ (\Cref{sec:null.1}).
	
	\item Applications to the Tyshkevich composition (\Cref{sec:tyshk.null}), 
	demonstrating how our nullspace characterizations extend to composite structures.
	
	\item An explicit determinant formula for split graphs (\Cref{sec:det.split}) expressed in terms of the matrix $R$ using the Schur complement and Cauchy's formula, which facilitates computational verification of singularity.

\end{enumerate}

Our results extend and complement existing work on singular graphs by exploiting the special structure of split graphs. While Sciriha's general theory \cite{sciriha2007characterization} applies to arbitrary graphs, the block decomposition of split graph adjacency matrices enables us to obtain more precise characterizations. Furthermore, our treatment of unbalanced split graphs and swing vertices reveals phenomena that have not been systematically explored in the literature on singular graphs.

The remainder of this paper is organized as follows. \Cref{sec:notation} introduces notation and provides background on split graphs. \Cref{sec:nullspace_split} establishes general facts about the kernel of split graphs, including fundamental relationships between the nullspaces of $R$, $R^t$ and $A(Sp)$. \Cref{sec:unbalanced} analyzes the support of unbalanced split graphs and its relation with swing vertices. \Cref{sec:supp.clique} focuses on split graphs whose  support intersects the clique, and \Cref{sec:null.1} specializes in those with nullity 1. \Cref{sec:tyshk.null} is about the nullspace of a Tyshkevich composition. \Cref{sec:det.split} derives an explicit determinant formula. 

%
%
%

%
%
%

\section{Notation}\label{sec:notation}

If $A$ is a matrix, then $A_{ij}$ denotes the $(i,j)$-entry of $A$ (i.e., the entry of $A$ in the intersection of row $i$ and column $j$), $A_{i*}$ denotes the row $i$ of $A$, and $A_{*j}$ denotes the column $j$ of $A$. If $A$ is square, we denote the adjugate of $A$ by $\adj(A)$. The symbol $e_i$ refers to the $i$\textit{-th canonical} vector, i.e., the vector with a ``1'' in the $i$-th position and zeros elsewhere. By $I$ we denote the identity matrix. Sometimes, we write a singleton set $\{x\}$ simply as ``$x$''.

We denote by $A(G)$ the adjacency matrix of $G$. The range of \(A\), i.e., the vector space spanned by the columns of \(A\), is denoted by \(\im(A)\). Its dimension, the rank of \(A\), is denoted by \(\rank(A)\). If $G$ is a graph, then $\im(G)$ and $\rank(G)$ denote, respectively, the range and the rank of its adjacency matrix. The nullspace of \(A\) is denoted by \(\nulo(A)\). Its dimension, the nullity of \(A\), is denoted by \(\nul(A)\). If $G$ is a graph, then $\nulo(G)$ and $\nul(G)$ denote, respectively, the nullspace and the nullity of its adjacency matrix. 

Let $U$ be a set of vectors of some vector space $V$; by $\langle U\rangle$ we denote the subspace spanned by all the linear combinations of vectors in $U$. We just write $\langle u\rangle$ instead of $\langle \{u\}\rangle$. $J$ is the ``all-ones'' matrix. Since $\rank(J)=1$, $\nul(J)$ is the number of columns of $J$ minus 1. $\unovec=(1,\ldots,1)^t$ is the ``all-ones'' vector (i.e., the matrix $J$ with just one column), where ``t'' means transpose. If $A$ is a nonzero matrix with non-negative entries, note that $\unovec\notin\nulo(A)$. Clearly, $\im(J)=\langle\unovec\rangle$. If $x=(x_1,\ldots,x_n)^t\in\mathbb{R}^n$, we have \(\unovec^t x=\sum_{i=1}^{n}x_i\). Clearly, $\unovec^t x=0$ if and only if $x\in\nulo(J)$.

If $G$ is a graph, then $N_G(v)=\{u\in V(G): uv\in E(G)\}$ is the neighborhood of the vertex $v$ in $G$. The degree in $G$ of a vertex $v$ (i.e., $|N_G(v)|$) is denoted by $\deg_G(v)$. The symbols $\omega(G)$ and $\alpha(G)$ denote, respectively, the clique number and the independence number of $G$. When $G$ is clear from the context, we can write $N(v), \deg(v), \omega$ and $\alpha$ instead of $N_G(v), \deg_G(v), \omega(G)$ and $\alpha(G)$, respectively. 

A graph $G\neq (\varnothing,\varnothing)$ is said to be \emph{split} if there exists a clique $K$ in $G$ and an independent set $S$ in $G$ such that $V(G)=K\dot{\cup}S$ (the symbol $\dot{\cup}$ denotes the disjoint union of two sets). In such a case, we use the notation $Sp$ instead of $G$ and we say that the pair $(K,S)$ is an \emph{s-partition} for $Sp$. Directly from the definition, it follows that every induced subgraph of a split graph is also split. We call \emph{clique vertices} the elements of $K$, and \emph{independent vertices} the elements of $S$. Two s-partitions $(K,S)$ and $(K',S')$ for $Sp$ are equal if $K=K'$ and $S=S'$.

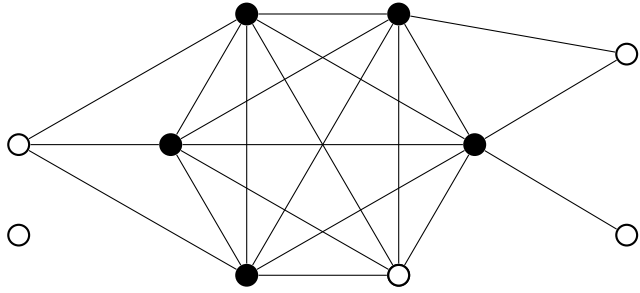
\begin{figure}[h]
	\centering
	\begin{tikzpicture}[scale=2, every node/.style={scale=1.3, circle, draw, thick, minimum size=6pt, inner sep=2pt}]
		\foreach \i in {0,...,5} {
			\node[fill=black] (k\i) at ({cos(60*\i)}, {sin(60*\i)}) {};
		}
		
		\foreach \i in {0,...,5} {
			\foreach \j in {\i,...,5} {
				\ifnum\i<\j
				\draw (k\i) -- (k\j);
				\fi
			}
		}
		
		\node[fill=white] (s1) at (-2, 0) {};
		\node[fill=white] (s2) at (-2, -0.6) {};
		\node[fill=white] (s3) at (2, 0.6) {};
		\node[fill=white] (s4) at (2, -0.6) {};
		
		\draw (s3) -- (k1);
		\draw (s3) -- (k0);
		\draw (s4) -- (k0);
		\draw (s1) -- (k2);
		\draw (s1) -- (k3);
		\draw (s1) -- (k4);
		
			\node[fill=white] (k5b) at ({cos(60*5)}, {sin(60*5)}) {};
	\end{tikzpicture}
	\caption{Split graph (clique vertices in black, independent in white).}
	\label{ejemplo.split}
\end{figure}

If $Sp$ is a split graph, the notation $Sp(K,S)$ means that we are referring to $Sp$ together with an s-partition $(K,S)$ for it. Unless otherwise specified, we implicitly assume that both $K$ and $S$ are nonempty, and the adjacency matrix of $Sp$ has the form
\[
A(Sp)=
\begin{pmatrix}
	J-I & R \\
	R^t & 0
\end{pmatrix},
\] 
after applying the ordering $(K,S)$ to $V(Sp)$. Therefore, the blocks $J-I$ and $0$ of $Sp$ are square, of order $|K|$ and $|S|$ respectively, while $R$, the \emph{biadjacency} (i.e., bipartite-adjacency) matrix associated to \(Sp\), is of size $|K|\times |S|$.

%
%
%
%
%
%

\section{Nullspace of split graphs}\label{sec:nullspace_split}

	When studying split graphs one is naturally led to investigate the column space of the biadjacency matrix \(R\) and its interplay with the all-ones vector \(\unovec\). The next lemma establishes a general matrix-theoretic criterion that is pivotal for such analysis. 
\begin{lemma}
	\label[lemma]{lem_image_condition_rank1_perturbation}
	Let $A$ be any matrix and let $B = uv^t \neq 0$ for some vectors $u, v$. 
	If $u \neq 0$, then $v \in \im(A)$ if and only if $\nulo(A^t) \subset \nulo(B)$. 
	In particular: $\unovec \in \im(A)$ if and only if $\nulo(A^t) \subset \nulo(J)$.
\end{lemma}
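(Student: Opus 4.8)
The plan is to reduce both sides of the claimed equivalence to statements about orthogonal complements in $\mathbb{R}^n$ and then invoke the involution $(W^\perp)^\perp = W$ valid for subspaces of a finite-dimensional real inner product space. First I would record two elementary identifications. On the one hand, by the standard relation between a matrix and its transpose, $\nulo(A^t) = \im(A)^\perp$ (the nullspace of $A^t$ is the orthogonal complement of the column space of $A$). On the other hand, since $u \neq 0$, we have $Bx = u\,(v^t x) = 0$ if and only if $v^t x = 0$; hence $\nulo(B) = \nulo(v^t) = \langle v\rangle^{\perp}$.

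With these in hand, the inclusion $\nulo(A^t) \subseteq \nulo(B)$ is literally $\im(A)^\perp \subseteq \langle v\rangle^\perp$. Since taking orthogonal complements reverses inclusions and is an involution on subspaces, this is equivalent to $\langle v\rangle = \langle v\rangle^{\perp\perp} \subseteq \im(A)^{\perp\perp} = \im(A)$, that is, to $v \in \im(A)$. This settles the general statement, and the ``in particular'' clause is just the instance $u = v = \unovec$, for which $B = \unovec\unovec^t = J$ and $u = \unovec \neq 0$.

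If one prefers to avoid quoting $\nulo(A^t) = \im(A)^\perp$, the two directions can be done by hand. The forward direction is immediate: if $v = Aw$ and $A^t y = 0$, then $v^t y = w^t A^t y = 0$, so $By = u\,(v^t y) = 0$, i.e.\ $y \in \nulo(B)$. For the converse, argue by contradiction: if $v \notin \im(A)$, write $v = v_1 + y$ with $v_1 \in \im(A)$ and $0 \neq y \in \im(A)^\perp$; then $A^t y = 0$ but $v^t y = \|y\|^2 \neq 0$, so $By = u\,(v^t y) \neq 0$ because $u \neq 0$, contradicting $\nulo(A^t) \subseteq \nulo(B)$.

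There is essentially no hard step here; the only points needing a moment's care are that the reduction $\nulo(B) = \langle v\rangle^\perp$ genuinely uses the hypothesis $u \neq 0$ (otherwise $B = 0$ and its nullspace is the whole space), and that $(W^\perp)^\perp = W$ relies on finite-dimensionality, which holds throughout since every matrix in play is finite.
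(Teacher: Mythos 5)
Your proof is correct and follows essentially the same route as the paper's: both reduce $\nulo(B)$ to $\langle v\rangle^{\perp}$ using $u \neq 0$ and then invoke the fundamental identity $\im(A) = \nulo(A^t)^{\perp}$ (your double-complement phrasing and your ``by hand'' orthogonal-decomposition variant are just repackagings of that same fact). No gaps; the specialization to $u = v = \unovec$, $B = J$ matches the paper as well.
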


\begin{proof}
	Since $u \neq 0$, we have $Bx = u(v^tx) = 0$ if and only if $v^tx = 0$. 
	Therefore, $\nulo(B) = \{x : v^tx = 0\} = v^{\perp}$.
	
	($\Rightarrow$) If $v \in \im(A)$, then $v = Aw$ for some vector $w$. 
	For any $x \in \nulo(A^t)$, we have $v^tx = w^t(A^tx) = w^t  0 = 0$, 
	so $x \in v^{\perp} = \nulo(B)$. Hence, $\nulo(A^t) \subset \nulo(B)$.
	
	($\Leftarrow$) Assume $\nulo(A^t) \subset v^{\perp}$. Then $v^tx = 0$ for all $x \in \nulo(A^t)$, 
	which means $v \in \nulo(A^t)^{\perp} = \im(A)$.
	
	The particular case follows by taking $u= v = \unovec$ and $B = J$.
\end{proof}

Let $A$ be any matrix with $n$ columns. The \emph{support} of $A$, denoted by $\Supp(A)$, is the set of all $v\in[n]=\{1,\ldots,n\}$ such that $x_v\neq 0$ for some vector $x\in\nulo(A)$. If $G$ is a graph, then the \emph{support} of $G$, denoted by $\Supp(G)$, is the support of its adjacency matrix, i.e., set of all vertices $v$ of $G$ such that $x_v\neq 0$ for some vector $x\in\nulo(G)$ (see \cite{jaume2018null}). 
Consequently, $V(G)\setminus\Supp(G)$ is the set of all vertices $v$ of $G$ such that $x_v=0$ for all $x\in\nulo(G)$. 

\begin{theorem}
	\label{kernel.split}
	If $Sp(K,S)$ is a split graph, then we have the following.
	\begin{enumerate}
		\item $(x_K,x_S)\in\nulo(Sp)$ if and only if $(I-J)x_K=Rx_S$ and $R^t x_K=0$.
		\item $\{\!(0,z)^t\! : z\in\nulo(R)\!\}\!\subset \!\nulo(Sp)$; equality holds if and only if \mbox{$\Supp(Sp) \!\subset\! S$.}
		\item $\nul(Sp)\! \geq \!\nul(R)\! = \!|S| \!-\! \rank(R)$; equality holds if and only if \mbox{$\Supp(Sp) \!\subset\! S$.}
		\item If $(x_K,x_S)\in\nulo(Sp)$ and $\unovec^t x_K=0$, then $x_K=0$.
		\item If $\unovec\in\im(R)$, then $\Supp(Sp) \subset S$.
	\end{enumerate}
\end{theorem}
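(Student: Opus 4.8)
The plan is to prove the five items essentially in the stated order, since each later item leans on the earlier ones. For item (1), I would simply write out the block equation $A(Sp)(x_K,x_S)^t = 0$ using the matrix form $A(Sp)=\begin{pmatrix} J-I & R\\ R^t & 0\end{pmatrix}$; this splits into $(J-I)x_K + Rx_S = 0$ and $R^t x_K = 0$, which after rearranging the first equation is exactly the claimed pair of conditions. This is purely mechanical. For item (2), the inclusion $\{(0,z)^t : z\in\nulo(R)\}\subset\nulo(Sp)$ follows from item (1): with $x_K=0$ we need $(I-J)0 = Rz$ and $R^t 0 = 0$, i.e. $Rz=0$, which holds precisely when $z\in\nulo(R)$. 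For the equality clause, I would argue that equality of these two sets is equivalent to saying every kernel vector has $x_K=0$, which by the definition of support (the support being the set of coordinates that are nonzero in some kernel vector) is equivalent to $\Supp(Sp)\cap K=\varnothing$, i.e. $\Supp(Sp)\subset S$.

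Item (3) is a dimension count: the map $z\mapsto (0,z)^t$ is an injective linear map from $\nulo(R)$ into $\nulo(Sp)$, so $\nul(Sp)\ge\nul(R)$, and $\nul(R)=|S|-\rank(R)$ by rank–nullity applied to the $|K|\times|S|$ matrix $R$ (which has $|S|$ columns). Equality of dimensions holds iff the inclusion of item (2) is an equality of subspaces, which by item (2) is iff $\Supp(Sp)\subset S$. Item (4) is the first genuinely substantive step: assume $(x_K,x_S)\in\nulo(Sp)$ with $\unovec^t x_K=0$. From item (1), $(I-J)x_K = Rx_S$; since $Jx_K = \unovec(\unovec^t x_K) = 0$, this collapses to $x_K = Rx_S$. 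But also $R^t x_K = 0$, so $x_K^t x_K = x_K^t R x_S = (R^t x_K)^t x_S = 0$, forcing $x_K=0$ (working over $\mathbb{R}$, where $x_K^tx_K=\|x_K\|^2$).

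Item (5) is where I expect the main obstacle, though it should yield to item (4) together with Lemma \ref{lem_image_condition_rank1_perturbation}. Suppose $\unovec\in\im(R)$ and take any $(x_K,x_S)\in\nulo(Sp)$; I want to show $x_K=0$, since then the support avoids $K$. By item (4) it suffices to show $\unovec^t x_K = 0$. Now $R^t x_K = 0$ means $x_K\in\nulo(R^t)$, and by Lemma \ref{lem_image_condition_rank1_perturbation} (applied with $A=R$, so that $\unovec\in\im(R)$ gives $\nulo(R^t)\subset\nulo(J)$), every vector in $\nulo(R^t)$ lies in $\nulo(J)$, hence $x_K\in\nulo(J)$, i.e. $Jx_K=0$, which gives $\unovec^t x_K=0$ (using $\unovec^t x_K = 0 \iff x_K\in\nulo(J)$ from the notation section). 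Then item (4) finishes it. The only delicate point is making sure the hypothesis is fed into Lemma \ref{lem_image_condition_rank1_perturbation} in the correct orientation (the lemma's ``$\unovec\in\im(A)$ iff $\nulo(A^t)\subset\nulo(J)$'' with $A=R$), which is straightforward once stated carefully.
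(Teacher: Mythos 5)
Your proposal is correct and follows essentially the same route as the paper: item (1) by the block equation, items (2)--(3) by the inclusion plus the support/equality argument, item (5) via \Cref{lem_image_condition_rank1_perturbation} applied to $R$ and then item (4). Your item (4) computes $x_K^t x_K = (R^t x_K)^t x_S = 0$ directly, which is just an unpacked version of the paper's appeal to $\im(R)\cap\nulo(R^t)=\{0\}$, so the argument is the same in substance.
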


\begin{proof}
	\begin{enumerate}[(1)]
		\item This follows immediately from the equation $A(Sp)(x_K,x_S)^t = 0$.
		
		\item If $z \in \nulo(R)$, then $A(Sp)(0,z)^t = (Rz, 0)^t = (0,0)^t$, so $(0,z)^t \in \nulo(Sp)$. 
		
		For the equality condition: If $\nulo(Sp) = \{(0,z)^t : z \in \nulo(R)\}$, then every kernel vector has its $K$-part zero, so $\Supp(Sp) \subset S$. Conversely, if $\Supp(Sp) \subset S$, then for any $(x_K, x_S) \in \nulo(Sp)$, we must have $x_K = 0$ (since any nonzero $x_K$ would put vertices from $K$ in the support), and by (1), $Rx_S = 0$, so $x_S \in \nulo(R)$.
		
		\item The inequality and the equality condition follow from part (2).
		
		\item From the hypothesis and part (1), we have $Rx_S = (I-J)x_K = x_K - Jx_K$. Since $\unovec^t x_K = 0$, we get $Jx_K = 0$, so $Rx_S = x_K$. Therefore, $x_K \in \im(R)$. Also from part (1), $R^t x_K = 0$, so $x_K \in \nulo(R^t)$. Hence, $x_K = 0$, because $\im(R)\cap\nulo(R^t)=\{0\}$ for any real matrix $R$.
		
		\item By \Cref{lem_image_condition_rank1_perturbation}, we have $\nulo(R^t) \subset \nulo(J)$. Now, for any $(x_K, x_S) \in \nulo(Sp)$, by (1) we have $R^t x_K = 0$, so $x_K \in \nulo(R^t) \subset \nulo(J)$, meaning $\unovec^t x_K = 0$. Then by part (4), $x_K = 0$. Therefore, $\Supp(Sp) \subset S$.\qedhere
	\end{enumerate}
\end{proof}

This theorem provides tools for understanding the nullspace structure of split graphs. In particular, item (5) gives a sufficient condition for when all support vertices lie in the independent set $S$.

It is important to note that the converse of item (5) of \Cref{kernel.split} is not true. In other words, there exist split graphs with $\Supp(Sp(K,S))\subset S$ and $\unovec\notin\im(R)$. To see that, consider the split graph $Sp(K,S)$ in \Cref{fig:counterexample.Supp.111111}. 
\begin{figure}[H]
	\centering
	\begin{tikzpicture}[scale=1.8, every node/.style={scale=1.3, circle, draw, thick, minimum size=6pt, inner sep=2pt}]
		
		\node[fill=black, label=above:$1$] (k1) at (0,1) {};
		\node[fill=black, label=right:$2$] (k2) at (1,0) {};
		\node[fill=black, label=left:$3$] (k3) at (0,-1) {};
		\node[fill=black, label=left:$4$] (k4) at (-1,0) {};
		
		\foreach \i/\j in {k1/k2, k1/k3, k1/k4, k2/k3, k2/k4, k3/k4} {
			\draw [thick](\i) -- (\j);
		}
		
		\node[fill=white, label=below:$5$] (i5) at (-2,-0.7) {};
		\node[fill=white, label=below:$6$] (i6) at (1.6,-0.7) {};
		\node[fill=white, label=right:$7$] (i7) at (0,-1.8) {};
		\node[fill=white, label=right:$8$] (i8) at (1.2,1.2) {}; 
		\node[fill=white, label=left:$9$] (i9) at (-1,1.5) {};
		
		\draw (i5) -- (k4);
		
		\draw[bend right=10] (i6) -- (k2);
		\draw[bend left=10] (i6) -- (k3);
		
		\draw[bend left=10] (i7) -- (k2);
		\draw (i7) -- (k3);
		\draw[bend right=10] (i7) -- (k4);
		
		\draw[bend left=8] (i8) -- (k1);
		\draw[bend right=8] (i8) -- (k3);
		
		\draw[bend left=8] (i9) -- (k1);
		\draw (i9) -- (k3);
		\draw[bend right=8] (i9) -- (k4);
		
	\end{tikzpicture}
	\caption{Split graph $Sp(K,S)$ with $\Supp(Sp)\subset S$, but $\unovec\notin\im(R)$. }
	\label{fig:counterexample.Supp.111111}
\end{figure}
Direct computations show that $\Supp(Sp)=\{5,\ldots,9\}=S$. However, the equation $Rx=\unovec$ has no solution, otherwise we end up with the absurdity 
\[ 1=R_{*3}x=(x_6+x_7)+(x_8+x_9)=R_{2*}x+R_{1*}x=2. \]
In the next proposition, we generalize the argument used for the split graph in \Cref{fig:counterexample.Supp.111111}.

\begin{proposition}
	Let $Sp(K,S)$ be a split graph with a vertex $v\in K$ such that 
	\[ N_{Sp}(v)\cap S = \dot{\bigcup_{u\in W}} \big(N_{Sp}(u)\cap S\big), \] 
	for some subset $W\subset K$. Then, for any $y\in\im(R)$, we have 
	\[ y_v = \sum_{u\in W}y_u. \]
\end{proposition}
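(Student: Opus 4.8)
The plan is to reduce the set-theoretic hypothesis to a single linear identity among the rows of $R$, and then apply it to an arbitrary element of $\im(R)$.

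First I would recast the neighbourhood condition in matrix terms. Since the rows of $R$ are indexed by $K$ and its columns by $S$, for each $w\in K$ the row $R_{w*}$ is precisely the characteristic $0$--$1$ vector of the set $N_{Sp}(w)\cap S$, viewed inside $S$. Summing the characteristic vectors of \emph{pairwise disjoint} subsets of $S$ gives the characteristic vector of their union (no coordinate of the sum exceeds $1$), so the hypothesis
\[
N_{Sp}(v)\cap S \;=\; \dot{\bigcup_{u\in W}}\,\bigl(N_{Sp}(u)\cap S\bigr)
\]
is equivalent to the row identity
\[
R_{v*} \;=\; \sum_{u\in W} R_{u*}.
\]
This is the only step that requires care: disjointness is essential, for otherwise the right-hand side would have a coordinate $\ge 2$ and could not equal the $0$--$1$ row $R_{v*}$.

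Next, let $y\in\im(R)$ and write $y=Rx$ for some $x\in\mathbb{R}^{|S|}$. Evaluating the $v$-th coordinate and using the row identity,
\[
y_v \;=\; (Rx)_v \;=\; R_{v*}\,x \;=\; \Bigl(\sum_{u\in W}R_{u*}\Bigr)x \;=\; \sum_{u\in W} R_{u*}\,x \;=\; \sum_{u\in W}(Rx)_u \;=\; \sum_{u\in W} y_u,
\]
which is the desired equality. In particular, taking $y=\unovec$ (so $\unovec_v=1$ while $\sum_{u\in W}\unovec_u=|W|$) recovers the fact that $\unovec\notin\im(R)$ whenever $|W|\neq 1$, which is exactly the numerical obstruction exhibited for the split graph in \Cref{fig:counterexample.Supp.111111}.

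There is essentially no real obstacle here: once the disjoint-union hypothesis has been translated into the additive identity $R_{v*}=\sum_{u\in W}R_{u*}$, the conclusion is a single line of linear algebra. The one point to state cleanly is the use of disjointness when passing from a union of neighbourhoods to a sum of characteristic vectors.
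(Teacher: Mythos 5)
Your proof is correct and follows essentially the same route as the paper: the paper splits the sum $\sum_{i\in N(v)\cap S}x_i$ over the disjoint pieces $N(u)\cap S$ directly, which is exactly your row identity $R_{v*}=\sum_{u\in W}R_{u*}$ applied to $x$. The only difference is presentational (you state the identity at the level of characteristic vectors before applying it), so no further comment is needed.
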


\begin{proof}
	Since $y\in\im(R)$, there exists $x\in\mathbb{R}^{|S|}$ such that $Rx=y$. Recall that for any vertex $w\in K$, the entry $y_w = (Rx)_w = R_{w*}x$ equals the sum of $x_i$ over all neighbors $i\in N(w)\cap S$.
	
	Using the disjoint union hypothesis:
	 \[ y_v\!=\!R_{v*}x\!=\!\sum_{i\in N(v)\cap S}x_i\!=\!\sum_{u\in W}\Big(\sum_{i\in N(u)\cap S}x_i\Big)\!=\!\sum_{u\in W}R_{u*}x\!=\!\sum_{u\in W}y_u. \qedhere\]
\end{proof}

The former proposition shows that the image of $R$ must satisfy certain linear constraints determined by the combinatorial structure of the split graph. In particular, if a vertex's neighborhood is the disjoint union of other neighborhoods, then their corresponding coordinates in any vector $y\in\im(R)$ are linearly dependent.\\

%
%
%

Let
\[
A(Sp) = 
\begin{pmatrix}
	J-I & R \\
	R^t & 0
\end{pmatrix}
\]
be the adjacency matrix of a split graph $Sp(K,S)$ such that $|K|\geq 2$. The \emph{clique-kernel} of \(Sp\) is defined as
\[ \cliqueker{Sp}:=\nulo(R^t)\cap\im((I-J)^{-1}R)\subset\mathbb{R}^{|K|}. \]

\begin{theorem}
	\label{kernel.split.basic.facts}
	Let $Sp(K,S)$ be a split graph such that $|K|\geq 2$.  
	\begin{enumerate}
		\item If $(x_K,x_S)^t\in\nulo(Sp)$, then $x_K\in \cliqueker{Sp}$.
		\item For each $x\in\cliqueker{Sp}$, there exists $y\in\mathbb{R}^{|S|}$ such that $(x,y)^t\in\nulo(Sp)$.
		\item $\Supp(Sp)\subset S$ if and only if $\cliqueker{Sp}=\{0\}$.
		\item If $Sp$ is singular, then $\Supp(Sp)\cap S\neq\varnothing$.
	\end{enumerate}
\end{theorem}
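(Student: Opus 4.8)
The plan is to reduce each of the four items to the criterion of \Cref{kernel.split}(1): a vector $(x_K,x_S)^t$ lies in $\nulo(Sp)$ if and only if $(I-J)x_K=Rx_S$ and $R^tx_K=0$. The structural fact I will lean on throughout is that, since $|K|\geq 2$, the matrix $J-I$ (the adjacency matrix of $K_{|K|}$) has eigenvalues $|K|-1$ and $-1$, both nonzero, so $I-J$ is invertible; this is precisely what the hypothesis $|K|\geq 2$ buys us, and it makes $(I-J)^{-1}R$ well defined, so that $\cliqueker{Sp}$ makes sense.

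For part (1) I would start from $(x_K,x_S)^t\in\nulo(Sp)$ and read off from \Cref{kernel.split}(1) both $R^tx_K=0$ (so $x_K\in\nulo(R^t)$) and $(I-J)x_K=Rx_S$; multiplying the latter on the left by $(I-J)^{-1}$ gives $x_K=(I-J)^{-1}Rx_S\in\im((I-J)^{-1}R)$, and intersecting the two memberships yields $x_K\in\cliqueker{Sp}$. For part (2), an element $x\in\cliqueker{Sp}$ comes with a $y\in\mathbb{R}^{|S|}$ such that $x=(I-J)^{-1}Ry$, equivalently $(I-J)x=Ry$, and it also satisfies $R^tx=0$; \Cref{kernel.split}(1) then shows that this very $y$ makes $(x,y)^t\in\nulo(Sp)$.

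Parts (3) and (4) then follow quickly. For part (3): if $\cliqueker{Sp}=\{0\}$, part (1) forces the $K$-component of every kernel vector to vanish, hence no clique vertex is supported and $\Supp(Sp)\subset S$; conversely, a nonzero element of $\cliqueker{Sp}$ produces, via part (2), a kernel vector with nonzero $K$-component, which is incompatible with $\Supp(Sp)\subset S$. For part (4): take any nonzero $(x_K,x_S)^t\in\nulo(Sp)$; if $x_K\neq 0$ while $x_S=0$, then $(I-J)x_K=Rx_S=0$ forces $x_K=0$ by invertibility of $I-J$, a contradiction, so $x_S\neq 0$; and if $x_K=0$ then $x_S\neq 0$ since the whole vector is nonzero. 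In either case a vertex of $S$ belongs to $\Supp(Sp)$.

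I do not expect a genuine obstacle here: the argument is essentially bookkeeping against \Cref{kernel.split}(1). The two points to be careful about are tracking exactly where the invertibility of $I-J$ (hence the hypothesis $|K|\geq 2$) is used — namely in parts (1), (2) and (4) — and, in part (2), making sure the vector $y$ witnessing membership in $\im((I-J)^{-1}R)$ is taken to be precisely the independent-set component of the resulting kernel vector, rather than some other preimage.
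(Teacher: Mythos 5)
Your proposal is correct and follows essentially the same route as the paper's proof: each item is reduced to \Cref{kernel.split}(1), with the invertibility of $I-J$ (from $|K|\geq 2$) used exactly where the paper uses it, and parts (3) and (4) deduced from (1), (2) and the invertibility argument just as in the paper.
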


\begin{proof}
	\begin{enumerate}[(1)]
		\item By \Cref{kernel.split}(1), we have $(I-J)x_K=Rx_S$ and $R^t x_K=0$. Since $|K|\geq 2$, $I-J$ is invertible, so $x_K=(I-J)^{-1}Rx_S\in\im((I-J)^{-1}R)$. Combined with $x_K\in\nulo(R^t)$, this shows $x_K\in\cliqueker{Sp}$.
		
		\item If $x\in\cliqueker{Sp}$, then $R^t x=0$ and there exists $y\in\mathbb{R}^{|S|}$ such that $(I-J)x=Ry$. Therefore, $(x,y)^t\in\nulo(Sp)$ by \Cref{kernel.split}(1).
		
		\item ($\Rightarrow$) If $\Supp(Sp)\subset S$, then for any $(x_K,x_S)\in\nulo(Sp)$, we have $x_K=0$. In particular, for any $x\in\cliqueker{Sp}$, by (2) there exists $y$ such that $(x,y)\in\nulo(Sp)$, so $x=0$. Hence $\cliqueker{Sp}=\{0\}$.
		
		($\Leftarrow$) If $\cliqueker{Sp}=\{0\}$, then by (1), for any $(x_K,x_S)\in\nulo(Sp)$ we have $x_K=0$, so $\Supp(Sp)\subset S$.
		
		\item Suppose $Sp$ is singular, so there exists $(x_K,x_S)\in\nulo(Sp)$ with $(x_K,x_S)\neq 0$. If $x_S=0$, then by \Cref{kernel.split}(1), $(I-J)x_K=0$. Since $I-J$ is invertible for $|K|\geq 2$, this implies $x_K=0$, contradicting $(x_K,x_S)\neq 0$. Therefore, $x_S\neq 0$, so $\Supp(Sp)\cap S\neq\varnothing$. \qedhere
	\end{enumerate}
\end{proof}

The clique-kernel of $Sp$ captures precisely the part of the nullspace that involves the clique vertices. When this subspace is trivial, all the supported vertices of $Sp$ are independent.

%
%
%
The relationship between the sizes of the clique and independent set in a split graph has implications for its singularity. The following result establishes simple  combinatorial criteria that guarantee when a split graph must be singular based solely on the cardinalities of its s-partition. In particular, we show that an imbalance in favor of the independent set always forces singularity, while in the balanced case, the singularity of the entire graph reduces to the singularity of the biadjacency matrix. 
\begin{theorem}
	\label{|K|<|I|.singular}
	Let $Sp(K,S)$ be a split graph. 
	\begin{enumerate}
		\item If $|K|<|S|$, then $Sp$ is singular.
		\item If $|K|=|S|$, then $Sp$ is singular if and only if $R$ is singular.
	\end{enumerate}
\end{theorem}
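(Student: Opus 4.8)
The plan is to read singularity off the block structure $A(Sp)=\begin{pmatrix} J-I & R\\ R^t & 0\end{pmatrix}$ via the dimension count in \Cref{kernel.split}(3), namely $\nul(Sp)\ge\nul(R)=|S|-\rank(R)$, together with the refinement coming from the clique-kernel in \Cref{kernel.split.basic.facts}.

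For part (1), if $|K|<|S|$ then $\rank(R)\le\min(|K|,|S|)=|K|<|S|$, so $\nul(R)=|S|-\rank(R)\ge 1$. By \Cref{kernel.split}(3), $\nul(Sp)\ge\nul(R)\ge 1$, hence $Sp$ is singular. (No genericity or case analysis is needed; the inequality $\rank(R)\le|K|$ is immediate from $R$ having $|K|$ rows.)

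For part (2), assume $|K|=|S|$; write $n=|K|=|S|$. If $R$ is singular, then $\rank(R)<n$, so $\nul(R)=n-\rank(R)\ge 1$, and again \Cref{kernel.split}(3) gives $\nul(Sp)\ge\nul(R)\ge 1$, so $Sp$ is singular. Conversely, suppose $R$ is nonsingular; I must show $Sp$ is nonsingular. Since $\rank(R)=n$, we have $\nulo(R)=\{0\}$ and $\nulo(R^t)=\{0\}$. Take any $(x_K,x_S)^t\in\nulo(Sp)$. By \Cref{kernel.split}(1), $R^tx_K=0$, so $x_K\in\nulo(R^t)=\{0\}$, i.e.\ $x_K=0$; then the same item gives $Rx_S=(I-J)x_K=0$, so $x_S\in\nulo(R)=\{0\}$, i.e.\ $x_S=0$. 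Hence $\nulo(Sp)=\{0\}$ and $Sp$ is nonsingular. (If one prefers to invoke the clique-kernel: when $R$ is invertible, $\nulo(R^t)=\{0\}$ forces $\cliqueker{Sp}=\{0\}$, so by \Cref{kernel.split.basic.facts}(3) $\Supp(Sp)\subset S$, and then \Cref{kernel.split}(3) gives $\nul(Sp)=\nul(R)=0$. Note this argument needs $|K|\ge 2$; the case $n=|K|=|S|=1$ is trivial since then $Sp=K_2$, $R=(1)$, which is nonsingular, matching the claim.)

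I do not anticipate a genuine obstacle here: both parts are short consequences of the rank bound already established in \Cref{kernel.split}. The only point requiring a moment's care is the converse in part (2) — making sure that nonsingularity of $R$ really kills the entire kernel of $A(Sp)$, not merely the part supported on $S$ — but this is handled cleanly by chasing the two equations of \Cref{kernel.split}(1) in the order $x_K$ first (via $\nulo(R^t)$), then $x_S$ (via $\nulo(R)$).
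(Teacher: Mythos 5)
Your proposal is correct and follows essentially the same route as the paper: part (1) is the identical rank bound combined with \Cref{kernel.split}(3), and your converse in part (2) is just the contrapositive form of the paper's argument, chasing the two equations of \Cref{kernel.split}(1) — $x_K=0$ via $\nulo(R^t)=\{0\}$, then $x_S=0$ via $\nulo(R)=\{0\}$. The parenthetical clique-kernel variant is a harmless alternative but not needed, and the direct equation chase avoids the $|K|\geq 2$ caveat entirely.
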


\begin{proof}
	\begin{enumerate}[(1)]
		\item Since $\rank(R)\leq\min\{|K|,|S|\}$, we have $\nul(R)=|S|-\rank(R)\geq |S|-|K|>0$. 
		By \Cref{kernel.split}(3), $\nul(Sp)\geq\nul(R)>0$, so $Sp$ is singular.
		
		\item ($\Rightarrow$) If $Sp$ is singular, then $\nul(Sp)>0$. 
		By \Cref{kernel.split}(3), we have $\nul(Sp)\geq\nul(R)$. 
		If $\nul(R)=0$, then any $(x_K,x_S)\in\nulo(Sp)$ must satisfy $x_K=0$ (otherwise $x_K$ would be a nonzero vector in $\nulo(R^t)$, but $\nulo(R^t)=\{0\}$ since $\nul(R)=0$ and $R$ is a square matrix). 
		Then from $(I-J)x_K=Rx_S$ we get $Rx_S=0$, so $x_S\in\nulo(R)=\{0\}$. 
		Thus $(x_K,x_S)=0$, contradicting $\nul(Sp)>0$. 
		Therefore $\nul(R)>0$, so $R$ is singular.
		
		($\Leftarrow$) If $R$ is singular, then $\nul(R)>0$. 
		By \Cref{kernel.split}(2), $\{(0,z)^t:z\in\nulo(R)\}\subset\nulo(Sp)$, so $\nul(Sp)\geq\nul(R)>0$. 
		Hence $Sp$ is singular. \qedhere
	\end{enumerate}
\end{proof}

%
%
%
Regularity conditions within the clique or independent set impose strong constraints on the nullspace structure of a split graph. This theorem reveals how degree homogeneity among clique vertices confines the support entirely to the independent set, while degree homogeneity among independent vertices forces a specific linear dependence in the kernel of $R$. 
\begin{theorem}
	\label{same.K.S-deg}
	Let $Sp(K,S)$ be a connected split graph. 
	\begin{enumerate}
		\item If $\deg_{Sp}(x) = \deg_{Sp}(y)$ for all $x,y \in K$, then $\Supp(Sp) \subset S$.
		\item If $\deg_{Sp}(x) = \deg_{Sp}(y)$ for all $x,y \in S$, then $\nulo(R) \subset \nulo(J)$.
	\end{enumerate}
\end{theorem}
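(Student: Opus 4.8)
The plan is to reduce both parts to elementary row/column–sum identities for the biadjacency matrix $R$ and then invoke the machinery already in place. The only genuinely necessary ingredient beyond bookkeeping is connectedness, which is used to exclude a degenerate case in each part.

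For (1), I would start from the fact that every clique vertex $v\in K$ is adjacent to the remaining $|K|-1$ vertices of $K$ together with its neighbours in $S$, so $\deg_{Sp}(v)=(|K|-1)+|N_{Sp}(v)\cap S|$. The hypothesis therefore forces $|N_{Sp}(v)\cap S|$ to equal a constant $d$ that does not depend on $v$; equivalently every row of $R$ sums to $d$, i.e.\ $R\unovec=d\unovec$ (the all-ones vectors here being of sizes $|S|$ and $|K|$ respectively). The next step is to rule out $d=0$: if $d=0$ then no vertex of $S$ has a neighbour in $K$, and since $S$ is independent every vertex of $S$ would be isolated; as $K\neq\varnothing$ and $S\neq\varnothing$ this contradicts connectedness. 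Hence $d\ge 1$, so $\unovec=R(d^{-1}\unovec)\in\im(R)$, and \Cref{kernel.split}(5) immediately gives $\Supp(Sp)\subset S$.

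For (2), the parallel observation is that an independent vertex $s\in S$ has all of its neighbours in $K$, so $\deg_{Sp}(s)=|N_{Sp}(s)\cap K|$, and the hypothesis makes every column of $R$ sum to a common value $d'$, i.e.\ $\unovec^t R=d'\unovec^t$. The same connectedness argument as in (1), now applied to $S$ (if $d'=0$ then every vertex of $S$ is isolated), yields $d'\ge 1$. Finally, for any $z\in\nulo(R)$ one computes $d'(\unovec^t z)=(\unovec^t R)z=\unovec^t(Rz)=0$, so $\unovec^t z=0$, that is $z\in\nulo(J)$; therefore $\nulo(R)\subset\nulo(J)$.

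I do not expect a real obstacle here. The two points that require care are keeping the dimensions of the various all-ones vectors and matrices straight — so that $R\unovec=d\unovec$ and $\unovec^tR=d'\unovec^t$ genuinely live between the correct spaces — and making explicit the use of connectedness (together with the standing assumption that both $K$ and $S$ are nonempty) to eliminate the cases $d=0$ in part (1) and $d'=0$ in part (2), since the statement is false without a hypothesis ruling out isolated independent vertices.
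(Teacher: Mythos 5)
Your proof is correct and follows essentially the same route as the paper's: constant row sums of $R$ give $\unovec\in\im(R)$ and hence $\Supp(Sp)\subset S$ via \Cref{kernel.split}(5), while constant column sums handle part (2), with connectedness ruling out the degenerate value $0$ in both cases. The only cosmetic difference is that in part (2) you check $\unovec^t z=0$ for $z\in\nulo(R)$ by a direct computation, whereas the paper phrases this as $\unovec\in\im(R^t)$ and cites \Cref{lem_image_condition_rank1_perturbation}; the two are the same argument.
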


\begin{proof}
	\begin{enumerate}[(1)]
		\item The condition $\deg(x) = \deg(y)$ for all $x,y \in K$ implies that all vertices in $K$ have the same number of neighbors in $S$, since within $K$ they are all mutually adjacent (each has degree $|K|-1$ within $K$). Thus, there exists $d \geq 0$ such that $|N(x) \cap S| = d$ for all $x \in K$. 
		
		Since $Sp$ is connected and $S$ is nonempty (as $Sp$ is a split graph with $|K| \geq 1$ and connected), we must have $d \geq 1$. Therefore, each row sum of $R$ equals $d$, so:
		\[ \sum_{j=1}^{|S|} R_{*j} = d\unovec, \]
		which shows that $\unovec \in \im(R)$. The result follows from \Cref{kernel.split}(5).
		
		\item The condition $\deg(x) = \deg(y)$ for all $x,y \in S$ means that all vertices in $S$ have the same number of neighbors in $K$ (since vertices in $S$ have no neighbors in $S$). Thus, there exists $d \geq 0$ such that $|N(x) \cap K| = d$ for all $x \in S$.
		
		By connectedness and the fact that $K$ is nonempty, we have $d \geq 1$. Therefore, each column sum of $R$ equals $d$, which means each row sum of $R^t$ equals $d$, so:
		\[ \sum_{j=1}^{|K|} R^t_{*j} = d\unovec, \]
		which shows that $\unovec \in \im(R^t)$. By \Cref{lem_image_condition_rank1_perturbation}, this implies $\nulo(R) \subset \nulo(J)$. \qedhere
	\end{enumerate}
\end{proof}

%
%
%
The following corollary provides a sufficient combinatorial condition for nonsingularity that arises from a perfect ``neighborhood covering'' of the clique by the independent set.
\begin{theorem}
	Let $Sp(K,S)$ be a split graph. If $K = \mathop{\dot{\bigcup}}\limits_{v \in S} N_{Sp}(v)$, then $Sp$ is nonsingular.
\end{theorem}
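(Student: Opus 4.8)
The plan is to read the covering hypothesis as two matrix facts about the biadjacency matrix $R$ and then feed them into \Cref{kernel.split}. First I would observe that $K=\dot{\bigcup}_{v\in S}N_{Sp}(v)$ says precisely that each clique vertex $k$ lies in $N_{Sp}(v)$ for a \emph{unique} $v\in S$; equivalently, the row of $R$ indexed by $k$ is the canonical row vector $e_v^{t}$, so every row of $R$ contains exactly one $1$. Summing the columns of $R$ then gives $\sum_{v\in S}R_{*v}=\unovec$, since the sets $N_{Sp}(v)$, $v\in S$, partition $K$; hence $\unovec\in\im(R)$, and \Cref{kernel.split}(5) yields $\Supp(Sp)\subset S$.

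Next I would note that the columns of $R$ are exactly the characteristic vectors of the sets $N_{Sp}(v)$, $v\in S$: they are nonzero (each part of the partition is nonempty) and have pairwise disjoint supports, hence are linearly independent. Therefore $\rank(R)=|S|$ and $\nul(R)=|S|-\rank(R)=0$, i.e., $\nulo(R)=\{0\}$. Combining the two paragraphs: take any $(x_K,x_S)^{t}\in\nulo(Sp)$; since $\Supp(Sp)\subset S$ we have $x_K=0$, and then \Cref{kernel.split}(1) forces $Rx_S=(I-J)\,0=0$, so $x_S\in\nulo(R)=\{0\}$. Thus $\nulo(Sp)=\{0\}$ and $Sp$ is nonsingular. (Alternatively one can bypass this last step entirely: the equality clause of \Cref{kernel.split}(3) gives $\nul(Sp)=\nul(R)=0$.)

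I do not expect a serious obstacle here: the statement is essentially a repackaging of two elementary observations about $R$ through \Cref{kernel.split}. The only point that needs a little care is the bookkeeping that the neighborhood-partition hypothesis delivers \emph{both} $\unovec\in\im(R)$ and full column rank of $R$; in particular one should read ``$\dot{\bigcup}$'' as a genuine partition with nonempty parts, which is exactly what makes the columns of $R$ nonzero and hence linearly independent. (If some $N_{Sp}(v)$ were empty, $v$ would be an isolated vertex and the conclusion would be false, so this is the intended reading.)
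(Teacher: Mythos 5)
Your proof is correct, and it follows the same overall skeleton as the paper's: show that the support lies in $S$ and that $R$ has trivial kernel, then conclude via \Cref{kernel.split}. The difference is in how the first step is reached. The paper observes that every clique vertex has exactly one neighbour in $S$, deduces that all clique vertices have the same degree, and invokes \Cref{same.K.S-deg}(1); you instead sum the columns of $R$ (the characteristic vectors of the parts $N_{Sp}(v)$) to obtain $\sum_{v\in S}R_{*v}=\unovec$, hence $\unovec\in\im(R)$, and apply \Cref{kernel.split}(5) directly. Your route is marginally more robust: \Cref{same.K.S-deg} is stated for \emph{connected} split graphs, a hypothesis the theorem under discussion does not carry and which the paper's proof does not verify, whereas \Cref{kernel.split}(5) needs no connectivity (and the paper's own proof of \Cref{same.K.S-deg}(1) passes through $\unovec\in\im(R)$ anyway, so the underlying content is identical). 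For the second step the paper argues row-by-row that $Rx=0$ forces $x_v=0$ for every $v\in S$, while you note that the columns of $R$ are nonzero vectors with pairwise disjoint supports, hence linearly independent, so $\nul(R)=0$; these are equivalent, and your closing step (either $x_K=0$ plus $Rx_S=0$, or the equality clause of \Cref{kernel.split}(3)) matches the paper's conclusion $\nul(Sp)=\nul(R)=0$. Finally, both proofs tacitly read $K=\dot{\bigcup}_{v\in S}N_{Sp}(v)$ as a partition into nonempty parts (otherwise an isolated vertex in $S$ would make $Sp$ singular); you flag this explicitly, which the paper does not.
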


\begin{proof}
	The condition $K = \dot{\bigcup}_{v \in S} N(v)$ means that the neighborhood sets $\{N(v): v \in S\}$ form a partition of $K$. This implies that each vertex in $K$ has exactly one neighbor in $S$, so every row of $R$ contains exactly one nonzero entry (which is 1). Consequently, all vertices in $K$ have the same degree in $Sp$, since each has degree $|K|-1$ within $K$ plus exactly 1 neighbor in $S$, giving $\deg(u) = |K|$ for all $u \in K$. By \Cref{same.K.S-deg}(1), this implies $\Supp(Sp) \subset S$. Then by \Cref{kernel.split.basic.facts}(3), we have $\cliqueker{Sp} = \{0\}$, and by \Cref{kernel.split}(3), $\nul(Sp) = \nul(R)$.
	
	Now, consider an arbitrary vector $x \in \mathbb{R}^{|S|}$ and suppose $Rx = 0$. Since each row of $R$ has exactly one nonzero entry, the equation $R_{i*}x = 0$ implies that for each $i \in K$, if $v \in S$ is the unique neighbor of $i$, then $x_v = 0$. As the neighborhoods cover all of $K$ and are disjoint, this means every $v \in S$ appears as the unique neighbor of some $i \in K$, so $x_v = 0$ for all $v \in S$. Therefore, $x = 0$ and $\nul(R) = 0$.
	
	Hence, $\nul(Sp) = \nul(R) = 0$, so $Sp$ is nonsingular.
\end{proof}

%
%
%
%
%
%
\section{Unbalanced split graphs}\label{sec:unbalanced}

A split graph $Sp(K,S)$ is said to be \emph{balanced} if $|K|=\omega(Sp)$ and $|S|=\alpha(Sp)$. Otherwise, we say that $Sp$ is \emph{unbalanced} (or not balanced). 
It is known that (see \cite{nordhausgaddumcheng2016split}, \cite{hammer1981splittance} and \cite{golumbic2004algorithmic}):
\begin{enumerate}
	\item $Sp$ is balanced if and only if it has a unique s-partition (thus, unbalanced split graphs are exactly those with two or more s-partitions);
	
	\item $Sp$ is balanced if and only if $\omega(Sp)+\alpha(Sp)=|Sp|$.
	
	\item $Sp(K,S)$ is unbalanced if and only if 
	\[ (|K|,|S|)\in\{(\omega(Sp),\alpha(Sp)-1),(\omega(Sp)-1,\alpha(Sp))\}. \]
	
	\item $Sp$ is unbalanced if and only if $\omega(Sp)+\alpha(Sp)=|Sp|+1$.
\end{enumerate}


We say that a vertex $w$ of a split graph $Sp$ is \emph{swing} in $Sp$ if there exist s-partitions $(K,S),(K',S')$ for $Sp$ such that $w\in K$ and $w\in S'$. Let $W(Sp)$ be the set of all swing vertices of $Sp$. Clearly, $W(Sp)=W(\overline{Sp})$, and $Sp$ is balanced if and only if $W(Sp)=\varnothing$. However, $Sp\setminus W(Sp)$ may not be balanced in general (take $Sp\approx P_3$). 
If $Sp$ is a split graph, let:
\begin{enumerate}
	
	\item $\mathcal{K}(Sp)$ be the set of all cliques $K$ of $Sp$ such that $(K,S)$ is an s-partition for $Sp$, for some $S$;
	
	\item $\mathcal{S}(Sp)$ be the set of all independent sets $S$ of $Sp$ such that $(K,S)$ is an s-partition for $Sp$, for some $K$;
	
	\item $K^*(Sp)$ be the set of vertices that are ``always clique'' in $Sp$, i.e., $K^*(Sp)=\bigcap (K\in \mathcal{K}(Sp))$;
	
	\item $S^*(Sp)$ be the set of vertices that are ``always independent'' in $Sp$, i.e., $S^*(Sp)=\bigcap (S\in \mathcal{S}(Sp))$. 
\end{enumerate}

Then, by the above definitions, it is clear that
\begin{equation}
	\label{vertex_tripartition}
	V(Sp) = W(Sp) \ \dot{\cup} \ K^*(Sp) \ \dot{\cup} \ S^*(Sp).
\end{equation}

The concept of swing vertex lies implicitly among several works on split graphs. But, as far as we know, the term ``swing'' first appears explicitly in \cite{whitman2020split} (page 8). However, there the definition of swing vertex is slightly different: a vertex $w$ is swing in $Sp(K,S)$ if 
\begin{equation}
	\label{whitman_def_swing}
	N_{Sp}(w)=K\setminus \{w\}.
\end{equation}
There is an issue with \eqref{whitman_def_swing}: it depends on the s-partition. To see that, consider the split graph $Sp=abc\approx P_3$ with the s-partitions $(K,S)=(\{a,b\},c)$ and $(K',S')=(\{b,c\},a)$. By our definition, $W(Sp)=\{a,c\}$, which is an invariant of $Sp$. However, using \eqref{whitman_def_swing}, we observe that: 1) in $Sp(K,S)$, $a$ is swing but $c$ is not; 2) in $Sp(K',S')$, $c$ is swing but $a$ is not. For this reason, we decided to modify \eqref{whitman_def_swing}. As we have seen above, this choice allows us to immediately obtain the useful vertex-decomposition \eqref{vertex_tripartition}.\\

A graph $G$ is called an \textit{NG-graph} if $\chi(G)+\chi(\overline{G})=|G|+1$, where $\chi(G)$ is the chromatic number of $G$. In particular, a split graph is NG if and only if it is unbalanced, because $\chi(G)=\omega(G)$ when $G$ is split, and $\alpha(Sp)+\omega(Sp)=|Sp|+1$ when $Sp$ is unbalanced. In \cite{nordhausgaddumcheng2016split} the authors study some structural and combinatorial properties of unbalanced split graphs in terms of what they call the $ABC$-partition of $G$, where:
\[ A_G=\{v\in V(G):\deg_G(v)=\chi(G)-1\}, \]
\[ B_G=\{v\in V(G):\deg_G(v)>\chi(G)-1\}, \]
\[ C_G=\{v\in V(G):\deg_G(v)<\chi(G)-1\}. \]
The ``$G$'' sub-index will be omitted if $G$ is clear from the context.

As we will see soon in \Cref{ABC=WK*S*}, $ABC$-partition and $WK^*S^*$-partition \eqref{vertex_tripartition} are essentially the same vertex decomposition. This will take several steps: \Cref{A_subset_W,B_subset_K*,C_subset_S*}. As a consequence of \Cref{ABC=WK*S*}, we obtain a translation (\Cref{cheng_collins_trenk}), in terms of $WK^*S^*$-partition, of Theorem 10 in \cite{nordhausgaddumcheng2016split}, which characterizes the s-partitions of an unbalanced split graph in terms of its $ABC$-partition. Moreover, thanks to \Cref{unb.max.clique.ind}, we obtain a slightly stronger version of the cited result. \Cref{cheng_collins_trenk} is fundamental to subsequently understand the support of unbalanced split graphs.

\begin{lemma}
	\label{A_subset_W}
	If $Sp(K,S)$ is unbalanced, then $A_{Sp}\subset W(Sp)$. 
\end{lemma}

\begin{proof}
	First of all, recall that $(|K|,|S|)\in\{(\omega,\alpha-1),(\omega-1,\alpha)\}$, since $Sp$ is unbalanced. Let $x\in A$. We have two cases: 1) $x\in K$; 2) $x\in S$.
	\begin{enumerate}[(1).]
		\item If $x\in K$, then $K\setminus x\subset N(x)$. 
		
		If $|K|=\omega-1$, then $N(x)=(K\setminus x)\dot{\cup}y$, for some $y\in S$. Since $N(y)\subset K$, we have $\deg(y)\leq\omega-1$. If $\deg(y)<\omega-1$, then $y\in C$, contradicting that there are no edges between $A$ and $C$ (see item 5 of Theorem 2 in \cite{nordhausgaddumcheng2016split}. So, $\deg(y)=\omega-1$, and hence $N(y)=K$. Therefore, $x\in W$ because $(N(x),(S\setminus y)\dot{\cup} x)$ is an s-partition for $Sp$ where $x$ is independent. 
		
		If $|K|=\omega$, then $N(x)=K\setminus x$, which means that $x$ has no neighbors in $S$. Now, it is easy to check that $(N(x), S\dot{\cup}x)$ is an s-partition for $Sp$ where $x$ is independent. 
		
		\item If $x\in S$, then $x$ is a clique vertex in $\overline{Sp}(S,K)$ with
		\[ \deg_{\overline{Sp}}(x)=|Sp|-1-\deg(x) = |Sp|-\omega = \alpha-1 = \omega(\overline{Sp})-1\]
		 (recall that $\alpha+\omega=|Sp|+1$, because $Sp$ is unbalanced), which means that $x\in A_{\overline{Sp}}$. Then, we can apply (1) to infer that $x\in W(\overline{Sp})=W$. \qedhere
	\end{enumerate}
\end{proof}

\begin{lemma}
	\label{B_subset_K*}
	If $Sp$ is unbalanced, then $B_{Sp}\subset K^*(Sp)$.
\end{lemma}

\begin{proof}
	Assume that $x\in B$ but there exists an s-partition $(K,S)$ for $Sp$ such that $x\in S$. Since $N(x)\subset K$ and $\deg(x)>\omega-1$, we have $\deg(x)=\omega$. Thus, $N(x)=K$ and $|K|=\omega$. Now, observe that $K\dot{\cup} x$ is a clique of size $\omega+1$ in $Sp$, which is absurd.
\end{proof}

\begin{lemma}
	\label{C_subset_S*}
	If $Sp$ is unbalanced, then $C_{Sp}\subset S^*(Sp)$.
\end{lemma}

\begin{proof}
	Assume that $x\in C$ but there exists an s-partition $(K,S)$ for $Sp$ such that $x\in K$. Since $K-x\subset N(x)$ and $\deg(x)<\omega-1$, we have $\deg(x)=\omega-2$. Thus, $N(x)=K\setminus x$ and $|K|=\omega-1$. Now, observe that $S\dot{\cup} x$ is an independent set of size $\alpha+1$ in $Sp$, which is absurd.
\end{proof}

Two vertices $u,v$ in a graph $G$ are said to be \emph{twins} in $G$ if
\[ N_G(u)\setminus v=N_G(v)\setminus u. \]
It is easy to check that ``to be twins'' is an equivalence relation over $V(G)$, and the equivalence classes are cliques or independent sets of $G$.

\begin{theorem}
	\label{ABC=WK*S*}
	If $Sp$ is an unbalanced split graph, then
	\[ A_{Sp}=W(Sp), \quad B_{Sp}=K^*(Sp), \quad C_{Sp}=S^*(Sp). \]
	Moreover, $W(Sp)$ is always a clique or an independent set, of mutually twin vertices.
\end{theorem}

\begin{proof}
	Since $A\dot{\cup}B\dot{\cup}C=V(Sp)=W\dot{\cup}K^*\dot{\cup}S^*$, the required identities follows from \Cref{A_subset_W,B_subset_K*,C_subset_S*}. The statement about $W$ can be subsequently derived from Theorem 2 in \cite{nordhausgaddumcheng2016split}, using that $A=W$.  
\end{proof}

\begin{corollary}
	Let $Sp(K,S)$ be a connected split graph such that $K=\bigcup_{v\in S}N_{Sp}(v)$ and $\rank(R)=1$. Then:
	\begin{enumerate}
		\item $R=J$;
		
		\item all the independent (clique) vertices of $Sp$ are pairwise twins;
		
		\item $(K,S)=(K^*(Sp),W(Sp))$ and $(|K|,|S|)=(\omega(Sp)-1,\alpha(Sp))$.
	\end{enumerate} 
\end{corollary}

\begin{proof}
	\begin{enumerate}[(1).]
		\item Since $Sp$ is connected and has no isolated vertices in $S$, all the columns of $R$ are nonzero. Since $K=\bigcup_{v\in S}N(v)$, all the rows of $R$ are nonzero. Moreover, since $\rank(R)=1$, all the columns of $R$ must be scalar multiples of each other. Since all columns are nonzero $\{0,1\}$-vectors, they must all be equal to the same vector. Since $R_{i*}\neq 0$ for all $i$ and $R$ is a $\{0,1\}$-matrix, we get that $R_{*j}=\unovec$ for all $j$.
		
		\item Since $R=J$, we have $N(v)=K$ for all $v\in S$ and $N(u)=(K\setminus u)\cup S$ for all $u\in K$. 
		
		\item If $|K|=\omega$ and $v\in S$, then $K\cup v$ would be a clique of size $\omega+1$ in $Sp$, which is absurd. Then, $\deg(v)=\omega-1$, and hence $S\subset A=W$ by \Cref{ABC=WK*S*}. On the other hand, since every clique vertex of $Sp$ is universal, we have $K\subset B=K^*$ by \Cref{ABC=WK*S*}. \qedhere
	\end{enumerate}
\end{proof}


\begin{proposition}\label{unb.max.clique.ind}
	Let $Sp$ be an unbalanced split graph. Then,
	\begin{enumerate}
		\item $\mathcal{K}(Sp)$ contains all the maximum cliques of $Sp$.
		
		\item $\mathcal{S}(Sp)$ contains all the maximum independent sets of $Sp$.
	\end{enumerate}
\end{proposition}

\begin{proof}
	\begin{enumerate}[(1).]
		\item Let $K$ be a maximum clique of $Sp$. Since $Sp$ is unbalanced, we know that $\alpha+\omega=|Sp|+1$, and so $\alpha-1=|Sp|-\omega=|V(Sp)\setminus K|$. Let $S$ be a maximum independent set of $Sp$.  Since $|K\cap S|\leq 1, |S|=\alpha$ and $|V(Sp)\setminus K|=\alpha-1$, it must be $S=(V(Sp)\setminus K)\dot{\cup} w$, for some $w\in K$. Consequently, $V(Sp)\setminus K$ is an independent set of $Sp$. Thus, $(K,V(Sp)\setminus K)$ is an s-partition for $Sp$, which means that $K\in\mathcal{K}(Sp)$.
		
		\item Let $S$ be a maximum independent set of $Sp$. Then, $S$ is a maximum clique of the graph $\overline{Sp}$ (the complement of $Sp$), which is clearly split and unbalanced. From (1), we know that $S\in\mathcal{K}(\overline{Sp})$, which means that $V(\overline{Sp})\setminus S$ is an independent set of $\overline{Sp}$. Hence, $V(Sp)\setminus S$ is a clique of $Sp$, which shows that $S\in\mathcal{S}(Sp)$. \qedhere
	\end{enumerate}
\end{proof}

\Cref{unb.max.clique.ind} does not hold in general for balanced split graph. In fact, every balanced $Sp(K,S)$ with an independent vertex of degree $|K|-1$ is a counterexample ($P_4$ is the ``simplest'' one). In other words, a balanced split graph $Sp(K,S)$ has always a unique s-partition, but $K$ might not be the only maximum clique of $Sp$. An analogous phenomenon can be observed for maximum independent sets. The following result clarifies this behavior of the balanced split graphs.

\begin{proposition}
	Let $Sp(K,S)$ be a balanced split graph. Then, we have the following.
	\begin{enumerate}
		\item $K$ is the only maximum clique of $Sp$ if and only if $Sp$ does not contain an independent vertex of degree $|K|-1$.
		
		\item $S$ is the only maximum independent set of $Sp$ if and only if $Sp$ does not contain a clique vertex $x$ such that $|N_{Sp}(x)\cap S|=1$.
	\end{enumerate}
\end{proposition}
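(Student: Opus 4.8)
The plan is to establish part (1) by proving both implications in contrapositive form, and then to obtain part (2) from part (1) by passing to the complement graph, just as in the proof of \Cref{unb.max.clique.ind}(2).

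For part (1), suppose first that $K'\neq K$ is a second maximum clique of $Sp$; the goal is to produce an independent vertex of degree $|K|-1$. Since $S$ is independent, $K'\cap S$ is a clique contained in $S$, so $|K'\cap S|\leq 1$; and if $K'\cap S=\varnothing$ then $K'\subseteq K$, which together with $|K'|=|K|=\omega(Sp)$ forces $K'=K$, a contradiction. Hence $K'\cap S=\{v\}$ for a unique $v\in S$, and $K'\setminus\{v\}$ is a $(|K|-1)$-subset of $K$, say $K'\setminus\{v\}=K\setminus\{u\}$. Being a member of the clique $K'$, the vertex $v$ is adjacent to every vertex of $K\setminus\{u\}$; moreover $v$ is \emph{not} adjacent to $u$, since otherwise $K\cup\{v\}$ would be a clique of size $|K|+1>\omega(Sp)$. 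As $v\in S$ has all its neighbours inside $K$, this gives $\deg_{Sp}(v)=|K|-1$. For the converse, suppose $v\in S$ has $\deg_{Sp}(v)=|K|-1$; then $v$ fails to be adjacent to exactly one vertex $u\in K$, and $(K\setminus\{u\})\cup\{v\}$ is a clique of size $|K|=\omega(Sp)$ that differs from $K$ (it contains $v\notin K$), so $K$ is not the unique maximum clique of $Sp$.

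For part (2), I would apply part (1) to $\overline{Sp}$ equipped with the s-partition $(S,K)$. One first checks that $\overline{Sp}(S,K)$ is again a balanced split graph: its clique number is $\alpha(Sp)$ and its independence number is $\omega(Sp)$, and their sum is $|Sp|=|\overline{Sp}|$, so the characterization $\omega+\alpha=|V|$ of balancedness applies. Maximum independent sets of $Sp$ are precisely the maximum cliques of $\overline{Sp}$, so ``$S$ is the unique maximum independent set of $Sp$'' is equivalent to ``$S$ is the unique maximum clique of $\overline{Sp}$'', which by part (1) holds if and only if $\overline{Sp}$ contains no independent vertex of degree $|S|-1$. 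An independent vertex of $\overline{Sp}$ is a vertex $x\in K$, and $N_{\overline{Sp}}(x)=S\setminus N_{Sp}(x)$, hence $\deg_{\overline{Sp}}(x)=|S|-|N_{Sp}(x)\cap S|$; this equals $|S|-1$ precisely when $|N_{Sp}(x)\cap S|=1$. Substituting turns the condition into exactly the one stated in part (2).

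I do not anticipate a genuine obstacle: the heart of part (1) is the single observation that a maximum clique other than $K$ must trade exactly one vertex of $K$ for one vertex of $S$, and everything else is bookkeeping. The only points requiring care are (i) the repeated use of the balanced hypothesis in the form $\omega(Sp)=|K|$ to forbid a clique of size $|K|+1$, and (ii) handling the complementation in part (2) correctly, where the ``degree in $\overline{Sp}$'' must be rewritten as a count of non-neighbours within $S$ rather than left as a raw degree.
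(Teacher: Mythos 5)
Your proof is correct. Part (1) is essentially the paper's own argument: a maximum clique other than $K$ must exchange exactly one vertex of $K$ for one vertex $v$ of $S$, and conversely an independent vertex of degree $|K|-1$ yields such a clique; the only cosmetic difference is that you rule out adjacency of $v$ to the remaining vertex $u$ via the bound $\omega(Sp)=|K|$ (a clique $K\cup\{v\}$ of size $|K|+1$ would be too large), while the paper phrases the same obstruction through swing vertices ($N(v)=K$ would make $v$ swing, impossible since $W(Sp)=\varnothing$ for balanced graphs). For part (2) you genuinely diverge: the paper gives a second, direct argument symmetric to part (1) (a maximum independent set other than $S$ swaps one vertex of $S$ for a clique vertex $x$, forcing $|N_{Sp}(x)\cap S|=1$), whereas you deduce (2) from (1) by passing to $\overline{Sp}$ with the s-partition $(S,K)$, exactly as the paper itself does in the proof of \Cref{unb.max.clique.ind}(2). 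Your route avoids repeating the combinatorial bookkeeping, at the cost of the (correctly handled) verifications that $\overline{Sp}(S,K)$ is again balanced, that maximum independent sets of $Sp$ are the maximum cliques of $\overline{Sp}$, and that for $x\in K$ one has $\deg_{\overline{Sp}}(x)=|S|-|N_{Sp}(x)\cap S|$, so that degree $|S|-1$ translates into $|N_{Sp}(x)\cap S|=1$; the paper's direct proof is self-contained and keeps the role of the (empty) set of swing vertices visible in both halves. Both arguments are complete and correct.
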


\begin{proof}
	\begin{enumerate}[(1).]
		\item If $x$ is a vertex of $Sp$ of degree $|K|-1$, then $x$ is an independent vertex of \(G\) (otherwise, $x$ would be swing). Now, observe that $N(x)\cup x$ is a maximum clique of $Sp$, different from $K$.
		
		For the converse, suppose that $K'$ is a maximum clique of $Sp$, different from $K$. Since $|K'\cap S|\leq 1$, it must be $K'=(K\setminus w)\dot{\cup}x$, for some $w\in K$ and $x\in S$. Since $x\in K'$, we have $K\setminus w\subset N(x)$. Since $x\in S$, $x$ is not adjacent to any vertex of $S$, which means that $N(x)\subset K$. If $w\in N(x)$, then $N(x)=K$ and hence $x\in W(Sp)=\varnothing$. This absurdity implies $N(x)=K\setminus w$, i.e., $\deg(x)=|K|-1$. 
		
		\item If $x$ is a vertex of $Sp$ such that $N(x)\cap S=\{w\}$ for some $w$, it is clear that $x\in K$ and $(I\setminus w)\dot{\cup}x$ is a maximum independent set different from $S$.
		
		For the converse, suppose that $S'$ is a maximum independent set of $Sp$, different from $S$. Since $|K\cap S'|\leq 1$, it must be $S'=(S\setminus w)\dot{\cup}x$, for some $w\in S$ and $x\in K$. Since $x\in K$, $K\setminus x\subset N(x)$. Since $x\in S'$, $x$ is not adjacent to any vertex of $S\setminus w$. Notice that $w\in N(x)$ (otherwise, $x$ would be swing). Therefore, $N(x)=(K\setminus x)\dot{\cup} w$, which means that $|N(x)\cap S|=1$. \qedhere
	\end{enumerate}
\end{proof}

\begin{corollary}
	\label{cheng_collins_trenk}
	Let $Sp$ be an unbalanced split graph and let $\mathcal{KS}$ be the collection of all the s-partitions for $Sp$. Then, we have the following (for simplicity, we will use $W,K^*,S^*$ instead of $W(Sp),K^*(Sp),S^*(Sp)$).
	\begin{enumerate}
		\item If $W=\{w\}$, then 
		\[ \mathcal{KS}=\{ (K^*\cup w, S^*), (K^*, S^*\cup w)  \}; \]
		moreover, $K^*\cup w$ is the unique maximum clique of $Sp$ and $S^*\cup w$ is the unique maximum independent set of $Sp$. 
		
		\item If $W$ is a clique of size $\geq 2$, then 
		\[ \mathcal{KS}=\{ (K^*\cup W, S^*)\}\cup\{ (K^*\cup(W\setminus w), S^*\cup w):w\in W \}; \]
		moreover, $K^*\cup W$ is the unique maximum clique of $Sp$, and all the maximum independent sets of $Sp$ are of the form $S^*\cup w$.
		
		\item If $W$ is an independent set of size $\geq 2$, then
		\[ \mathcal{KS}=\{ (K^*, S^*\cup W)\}\cup\{ (K^*\cup w, S^*\cup(W\setminus w)):w\in W \}; \]
		moreover, $S^*\cup W$ is the unique maximum independent set of $Sp$, and all the maximum cliques of $Sp$ are of the form $K^*\cup w$.
	\end{enumerate}
\end{corollary}

\begin{proof}
	It follows from \Cref{ABC=WK*S*}, Theorem 10 in \cite{nordhausgaddumcheng2016split} and \Cref{unb.max.clique.ind}.
\end{proof}

Notice that \Cref{cheng_collins_trenk} implicitly characterizes a swing vertex in terms of its neighborhood (compare with \eqref{whitman_def_swing}): 
\begin{enumerate}[(1).]
	\item If $W$ is a clique, then $w$ is swing if and only if $N(w)=K^*\cup(W\setminus w)=\{v\neq w:\deg(v)\geq\omega-1\}$;
	
	\item If $W$ is an independent set, then $w$ is swing if and only if $N(w)=K^*=\{v:\deg(v)\geq\omega\}$.
\end{enumerate}


%
%
%

 The following theorem states that in unbalanced split graphs, supported vertices must lie in the intersection of all the maximum independent sets. 
 
\begin{theorem}
	\label{unbalanced.supp}
	Let $Sp$ be an unbalanced split graph. Then, $\Supp(Sp)$ is contained in the intersection of all the maximum independent sets of $Sp$.
\end{theorem}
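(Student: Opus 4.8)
The plan is to prove the (equivalent but more convenient) assertion that $\Supp(Sp)\subseteq N$ for \emph{every} maximum independent set $N$ of $Sp$; intersecting over all such $N$ then gives the statement. So fix a maximum independent set $N$. If $Sp$ has no edges, then $A(Sp)=0$, so $\Supp(Sp)=V(Sp)$ and $N=V(Sp)$ is the unique maximum independent set, and there is nothing to prove; hence I may assume $\omega(Sp)\geq 2$.

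Next I would fix a convenient s-partition built from $N$. By \Cref{unb.max.clique.ind}(2), $N\in\mathcal{S}(Sp)$, so $K:=V(Sp)\setminus N$ is a clique and $(K,N)$ is an s-partition of $Sp$. Because $Sp$ is unbalanced, $\omega(Sp)+\alpha(Sp)=|V(Sp)|+1$, and therefore $|K|=|V(Sp)|-\alpha(Sp)=\omega(Sp)-1$; in particular $K$ is a clique one vertex short of maximum, and $K\neq\varnothing$.

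The crux is to produce a vertex $v\in N$ adjacent to every vertex of $K$. For this, take any maximum clique $K''$ of $Sp$, so $|K''|=\omega(Sp)=|K|+1$. Since $|K''|>|K|$, $K''$ is not contained in $K=V(Sp)\setminus N$, so $K''\cap N\neq\varnothing$; and since $K''$ is a clique and $N$ an independent set, this intersection consists of a single vertex, call it $v$. Then $K''\setminus\{v\}$ is disjoint from $N$, hence contained in $K$, and has $\omega(Sp)-1=|K|$ elements, so $K''\setminus\{v\}=K$, i.e.\ $K''=K\cup\{v\}$; thus $v$ is adjacent to every vertex of $K$. Letting $R$ be the biadjacency matrix of the s-partition $(K,N)$, the column $R_{*v}$ is then $\unovec$, so $\unovec\in\im(R)$, and \Cref{kernel.split}(5) gives $\Supp(Sp)\subseteq N$, completing the proof.

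The point to watch is that the unbalanced hypothesis is used twice: it forces $|V(Sp)\setminus N|=\omega(Sp)-1$, so that a maximum clique is obtained from $V(Sp)\setminus N$ by adjoining a single vertex of $N$, and — through \Cref{unb.max.clique.ind}(2) — it guarantees that $V(Sp)\setminus N$ is a clique at all; both features genuinely fail for balanced split graphs (e.g.\ $P_4$). A more pedestrian alternative would be to split into the three cases of \Cref{cheng_collins_trenk} (whether $W(Sp)$ is a single vertex, a clique, or an independent set) and apply the same $\unovec\in\im(R)$ trick s-partition by s-partition using \eqref{swing_def}; the argument above just packages those cases uniformly.
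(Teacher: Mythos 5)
Your proof is correct and follows essentially the same route as the paper: for an s-partition whose independent part is a maximum independent set, exhibit an all-ones column of $R$ so that $\unovec\in\im(R)$, apply \Cref{kernel.split}(5) to get $\Supp(Sp)\subseteq N$, and use \Cref{unb.max.clique.ind}(2) so that every maximum independent set is covered. The only difference is that where the paper cites \Cref{cheng_collins_trenk} to produce the vertex of $N$ adjacent to all of $K=V(Sp)\setminus N$, you obtain it by a direct counting argument from $\omega(Sp)+\alpha(Sp)=|V(Sp)|+1$ (a maximum clique meets $N$ in exactly one vertex and its remainder fills $K$), which makes that step self-contained but does not change the structure of the argument.
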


\begin{proof}
	Consider an arbitrary s-partition $(K,S)$ for $Sp$ such that $|S|=\alpha(Sp)$. The existence of such is guaranteed by \Cref{cheng_collins_trenk}. Then, by \Cref{cheng_collins_trenk}, there exists a vertex $w\in S$ such that $N(w)=K$, which means that $R_{*w}=\unovec$. Since $\unovec\in\im(R)$, we conclude by \Cref{kernel.split}(5) that $\Supp(Sp)\subset S$. Therefore, $\Supp(Sp)$ is contained in the intersection of all the maximum independent sets in $\mathcal{S}(Sp)$. Now, use \Cref{unb.max.clique.ind}(2).
\end{proof}

A \emph{threshold} graph is a graph that can be constructed from $K_1$ by repeatedly applying either of the following two operations: 1) addition of an isolated vertex; 2) addition of a universal vertex. Thus, each unlabeled threshold graph of order $n$ can be uniquely identified with a binary sequence $0b_2b_3\ldots b_n$, with $b_i\in\{0,1\}$, where $0=$``add an isolated vertex'' and $1=$``add a universal vertex''. Threshold graphs are characterized as those graphs that do not contain induced subgraphs isomorphic to $P_4$, $C_4$ or $2K_2$ (\cite{barrus.west.A4}, page 2). Since a graph is split if and only if it does not contain induced subgraphs isomorphic to $2K_2$, $C_4$ or $C_5$ (\cite{barrus.west.A4}, page 14), it is clear that threshold graphs are split. If $0b_2\ldots b_{n}$ is the binary sequence associated with a threshold graph $Sp$, it is easy to see that 
\[ (K,S)=(\{v\in V(Sp):b_v=1\},\{v\in V(Sp):b_v=0\}) \]
is an s-partition for $Sp$. Then, $N_{Sp}(v_1)=K$, which shows that $Sp$ is unbalanced. The following corollary is a refined version of a result that can be found in \cite{panelo5316214core}.

%
%
%

\begin{corollary}
	\label[corollary]{thresh.ind.supp}
	If $G$ is a threshold graph, then $\Supp(G)$ is contained in the intersection of all the maximum independent sets of $G$.
\end{corollary}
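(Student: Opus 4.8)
The plan is to obtain this as a direct specialization of \Cref{unbalanced.supp}: it is enough to verify that every threshold graph is an unbalanced split graph, and then the conclusion is verbatim the statement of that theorem. So the whole task reduces to recalling two structural facts about threshold graphs, both essentially already noted in the text preceding the statement.

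First I would recall that a threshold graph is split: by its forbidden-induced-subgraph characterization it contains no induced $P_4$, $C_4$ or $2K_2$, and in particular no induced $2K_2$, $C_4$ or $C_5$, which is exactly the forbidden-subgraph description of split graphs. Next, take the binary sequence $0b_2\ldots b_n$ associated with $Sp$ together with the s-partition $(K,S)$ where $K=\{v:b_v=1\}$ and $S=\{v:b_v=0\}$, as recalled just before the statement; the first vertex $v_1$ lies in $S$ (since $b_1=0$) and satisfies $N_{Sp}(v_1)=K$, because every vertex added after $v_1$ as a universal vertex is adjacent to it while every vertex added later as an isolated vertex is not. Hence $K\cup v_1$ is a clique (as $v_1$ is adjacent to all of $K$) and $S\setminus v_1$ is independent, so $(K\cup v_1,\,S\setminus v_1)$ is a second s-partition for $Sp$. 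Since an unbalanced split graph is precisely one with two or more s-partitions, $Sp$ is unbalanced, and \Cref{unbalanced.supp} then gives that $\Supp(Sp)$ is contained in the intersection of all maximum independent sets of $Sp$.

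I do not expect any genuine obstacle; the only thing to watch is the boundary case $S=\{v_1\}$, i.e.\ when $Sp$ is a complete graph. Then the ``second'' s-partition degenerates to $(V(Sp),\varnothing)$, but $A(Sp)=J-I$ is nonsingular for $|V(Sp)|\ge 2$, so $\Supp(Sp)=\varnothing$ and the inclusion holds vacuously; and if $|V(Sp)|=1$ the support is the whole single-vertex independent set, so again the inclusion holds. Thus all the content of the corollary lies in the observation that a threshold graph is never balanced, and this is the point the proof should make explicit before invoking \Cref{unbalanced.supp}.
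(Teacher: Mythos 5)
Your proposal is correct and follows exactly the paper's route: the paper's proof is simply ``follows from the previous discussion and from \Cref{unbalanced.supp}'', where the preceding discussion is precisely your argument that a threshold graph is split and that the vertex $v_1$ of its binary sequence satisfies $N_{Sp}(v_1)=K$, making $Sp$ unbalanced. Your extra remark on the degenerate complete-graph case is a harmless (and slightly more careful) addition, not a departure from the paper's approach.
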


\begin{proof}
	It follows from the previous discussion and from \Cref{unbalanced.supp}.
\end{proof}

%
%
%

%
%
%
\begin{lemma}
	\label[lemma]{false.twins.x_u=x_v}
	Let $u,v$ be twin vertices of a graph $G$. If $x\in\nulo(G)$ and $uv\in E(G)$, then $x_u=x_v$.
\end{lemma}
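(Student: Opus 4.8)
The plan is to read off the two scalar equations that $x\in\nulo(G)$ imposes at the rows indexed by $u$ and by $v$, and then subtract them. Writing $A=A(G)$, the condition $Ax=0$ gives, for every vertex $t$, the equation $\sum_{w\in N_G(t)}x_w=0$ (the sum over $t$'s neighbours, with no contribution from $t$ itself since $A$ has zero diagonal).

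Next I would exploit the twin hypothesis to compare the neighbourhoods of $u$ and $v$. Set $S=N_G(u)\setminus\{v\}=N_G(v)\setminus\{u\}$. Because $G$ is loopless we have $u\notin N_G(u)$ and $v\notin N_G(v)$, hence $u\notin S$ and $v\notin S$; and because $uv\in E(G)$ we have $v\in N_G(u)$ and $u\in N_G(v)$. Therefore the decompositions $N_G(u)=S\,\dot\cup\,\{v\}$ and $N_G(v)=S\,\dot\cup\,\{u\}$ are disjoint unions. Substituting into the row equations yields
\[
\sum_{w\in S}x_w+x_v=0
\qquad\text{and}\qquad
\sum_{w\in S}x_w+x_u=0 .
\]

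Subtracting the second equation from the first gives $x_v-x_u=0$, i.e.\ $x_u=x_v$, which is the claim. There is essentially no obstacle here; the only point requiring care is the bookkeeping that guarantees $u,v\notin S$ (so that the terms $x_u$ and $x_v$ are genuinely extra and not already absorbed into $\sum_{w\in S}x_w$), which is exactly where the loopless assumption and the hypothesis $uv\in E(G)$ are used.
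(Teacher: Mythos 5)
Your proof is correct and follows essentially the same route as the paper: both arguments read off the kernel equations at the rows of $u$ and $v$, use the twin condition $N_G(u)\setminus\{v\}=N_G(v)\setminus\{u\}$ to identify the common part of the two sums, and subtract to conclude $x_u=x_v$. Your extra care in checking that $u,v\notin S$ is a nice touch but does not change the substance of the argument.
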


\begin{proof}
	Since $x\in\nulo(G)$, we have
	\[ G_{v*}x=x_u+\sum_{i\in N(v)\setminus u}x_i = 0 = G_{u*}x=x_v+\sum_{i\in N(u)\setminus v}x_i. \]
	Since $N(v)\setminus u=N(u)\setminus v$, we have
	\[ \sum_{i\in N(v)\setminus u}x_i = \sum_{i\in N(u)\setminus v}x_i. \] 
	Therefore, $x_u=x_v$, as desired.
\end{proof}
%
%
%

\begin{theorem}
	Let $Sp$ be an unbalanced split graph. Then, we have the following:
	\begin{enumerate}
		\item $W(Sp)=\{v\in V(Sp):\deg_{Sp}(v)=\omega(Sp)-1\}$;
		
		\item $W(Sp)$ is a clique or an independent set of mutually twin vertices;
		
		\item if $W(Sp)=\{w\}$, then $\Supp(Sp)\subset S^*(Sp)\cup w$;
		
		\item if $W(Sp)$ is a clique of size $\geq 2$, then $\Supp(Sp)\subset S^*(Sp)$; in particular, $W(Sp)\cap\Supp(Sp)=\varnothing$;
		
		\item if $W(Sp)$ is an independent set of size $\geq 2$, then 
		\[ W(Sp)\subset\Supp(Sp)\subset S^*(Sp)\cup W(Sp). \] 
	\end{enumerate}
\end{theorem}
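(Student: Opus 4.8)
The plan is to deduce part (1) from the twin structure of swing vertices already established, and to obtain parts (2)--(4) from the classification of the maximum independent sets of an unbalanced split graph together with \Cref{unbalanced.supp}. For (1): since $Sp$ is unbalanced, $W(Sp)\neq\varnothing$, so fix $w\in W(Sp)$; by \Cref{swing.all.twins} we have $[w]=W(Sp)$, and since every twin equivalence class is a clique or an independent set, $W(Sp)$ is a clique or an independent set whose vertices are pairwise twins.

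For (2), (3) and the upper bound in (4), I would first invoke \Cref{unb.max.clique.ind}(2) to observe that every maximum independent set of $Sp$ lies in $\mathcal{S}(Sp)$, so the maximum independent sets of $\mathcal{S}(Sp)$ identified in \Cref{cheng_collins_trenk} are precisely all the maximum independent sets of $Sp$. Hence: if $W(Sp)=\{w\}$, the unique maximum independent set is $S^*(Sp)\cup w$; if $W(Sp)$ is a clique of size $\geq 2$, the maximum independent sets are exactly the sets $S^*(Sp)\cup w$ for $w\in W(Sp)$, whose intersection is $S^*(Sp)$ because $|W(Sp)|\geq 2$; and if $W(Sp)$ is an independent set of size $\geq 2$, the unique maximum independent set is $S^*(Sp)\cup W(Sp)$. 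In each case \Cref{unbalanced.supp} yields the stated containment of $\Supp(Sp)$, and in the clique case $S^*(Sp)\cap W(Sp)=\varnothing$ by the tripartition \eqref{vertex_tripartition}, so $W(Sp)\cap\Supp(Sp)=\varnothing$.

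The only new ingredient is the lower bound $W(Sp)\subset\Supp(Sp)$ in (4). When $W(Sp)$ is independent, every swing vertex has the common neighborhood $N_{Sp}(w)=K^*(Sp)$, as can be read from the s-partition $\big(K^*(Sp)\cup w,\,S^*(Sp)\cup(W(Sp)\setminus w)\big)$ of \Cref{cheng_collins_trenk}(3) via \eqref{swing_def}. Then, for any two distinct $w,w'\in W(Sp)$, the vector $e_w-e_{w'}$ lies in $\nulo(Sp)$: for every vertex $v$ one has $w\in N_{Sp}(v)\iff v\in K^*(Sp)\iff w'\in N_{Sp}(v)$, so the two contributions to $\big(A(Sp)(e_w-e_{w'})\big)_v$ cancel. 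Since $e_w-e_{w'}$ is nonzero at both $w$ and $w'$ and $|W(Sp)|\geq 2$, this forces $W(Sp)\subset\Supp(Sp)$. I expect the main ``obstacle'' to be purely bookkeeping: carefully passing from ``maximum independent set of $\mathcal{S}(Sp)$'' to ``maximum independent set of $Sp$'' via \Cref{unb.max.clique.ind}(2), and using $|W(Sp)|\geq 2$ both to collapse the intersection in (3) and to supply a second swing vertex in (4).
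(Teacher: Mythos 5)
Your proposal is correct and follows essentially the same route as the paper: part (1) via \Cref{swing.all.twins}, parts (2), (3) and the upper bound in (4) by combining \Cref{unb.max.clique.ind}(2), \Cref{cheng_collins_trenk} and \Cref{unbalanced.supp} with the tripartition \eqref{vertex_tripartition}, and the lower bound in (4) by noting that distinct swing vertices have identical columns in $A(Sp)$ (common neighborhood $K^*(Sp)$), so $e_w-e_{w'}\in\nulo(Sp)$. The only difference is that you spell out explicitly why the columns coincide, which the paper leaves as an observation.
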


\begin{proof}
	\begin{enumerate}[(1)]
		
		\item It is the content of \Cref{ABC=WK*S*}.
		
		\item It is the content of \Cref{ABC=WK*S*}.
		
		\item It follows from \Cref{unbalanced.supp} and \Cref{cheng_collins_trenk}(1).
		
		\item From \Cref{cheng_collins_trenk}(2) we easily derive that the intersection of all the maximum independent sets of $Sp$ is $S^*$. Hence, $\Supp(Sp)\subset S^*$ by \Cref{unbalanced.supp}, and so $W\cap\Supp(Sp)=\varnothing$ by \eqref{vertex_tripartition}.
		
		Alternatively: if $x\in\nulo(Sp)$, then $x_u=x_v$ for all $u,v\in W$, by \Cref{false.twins.x_u=x_v} and \Cref{ABC=WK*S*}. Since $\Supp(Sp)$ is independent by \Cref{unbalanced.supp}, it must be $x_w=0$ for all $w\in W$, because $|W|\geq 2$ and $W$ is a clique. Consequently, $W\cap\Supp(Sp)=\varnothing$. 
		
		\item If $u,v$ are distinct swing vertices, observe that $A(Sp)_{*u}=A(Sp)_{*v}$. As a consequence, $e_u-e_v\in\nulo(Sp)$. Then, $W\subset\Supp(Sp)$. 
		
		On the other hand, we know from \Cref{cheng_collins_trenk}(3) that $S^*\cup W$ is the unique maximum independent set of $Sp$. Therefore, $\Supp(Sp)\subset S^*\cup W$, by \Cref{unbalanced.supp}. \qedhere
	\end{enumerate}
\end{proof}
%
%
%

%
%
%
%
%
%

\section{Split graphs with supported clique vertices}\label{sec:supp.clique}

In this section 
we prove that the subspace of kernel vectors with support in the clique (the clique-kernel) is at most one-dimensional. This result leads to a precise dichotomy: the overall nullity of the split graph is either equal to the nullity of its biadjacency matrix \(R\), or exactly one greater. This establishes a complete and computable classification of the nullity based purely on the dimension of this key subspace, tightly linking the combinatorial presence of supported clique vertices to the algebraic property of the graph's nullity.

The following result is crucial as it shows that when clique vertices appear in the support, they do so in essentially a one-dimensional way.
\begin{lemma}
	\label{lemma.clique.support}
	Let $Sp(K,S)$ be a split graph such that $|K|\geq 2$. If $\cliqueker{Sp}\neq \{0\}$, then $\dim(\cliqueker{Sp})=1$
\end{lemma}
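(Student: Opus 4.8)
The plan is to show that the linear functional $x\mapsto\unovec^t x$ is injective on the subspace $\cliqueker{Sp}$; this forces $\dim(\cliqueker{Sp})\le 1$, and together with the hypothesis $\cliqueker{Sp}\ne\{0\}$ it yields $\dim(\cliqueker{Sp})=1$.

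First I would record that $\cliqueker{Sp}=\nulo(R^t)\cap\im((I-J)^{-1}R)$ is genuinely a subspace of $\mathbb{R}^{|K|}$, being an intersection of two subspaces (and here $|K|\ge 2$ guarantees that $I-J$ is invertible, so the definition is meaningful). Hence I may take linear combinations of its elements and remain inside it.

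The heart of the argument is the \emph{claim}: every nonzero $x\in\cliqueker{Sp}$ satisfies $\unovec^t x\ne 0$. To prove it, take $x\in\cliqueker{Sp}$; by \Cref{kernel.split.basic.facts}(2) there exists $y\in\mathbb{R}^{|S|}$ with $(x,y)^t\in\nulo(Sp)$. If in addition $\unovec^t x=0$, then \Cref{kernel.split}(4) forces $x=0$. Contrapositively, $x\ne 0$ implies $\unovec^t x\ne 0$. To finish, let $x_1,x_2\in\cliqueker{Sp}$ be nonzero and set $c_i=\unovec^t x_i$, so $c_1,c_2\ne 0$ by the claim. Then $z:=c_2x_1-c_1x_2\in\cliqueker{Sp}$ and $\unovec^t z=c_2c_1-c_1c_2=0$, so the claim forces $z=0$; hence $x_1=(c_1/c_2)x_2$. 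Thus any two nonzero elements of $\cliqueker{Sp}$ are proportional, so $\dim(\cliqueker{Sp})\le 1$, and therefore $\dim(\cliqueker{Sp})=1$.

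I do not expect a real obstacle here: the only nontrivial ingredient is \Cref{kernel.split}(4), which already packages the linear-algebra fact that $\im(R)\cap\nulo(R^t)=\{0\}$; once that is available the proof is a one-line dimension count. The only point to keep straight is that the hypothesis $|K|\ge 2$ is exactly what makes $I-J$ invertible and keeps \Cref{kernel.split.basic.facts} applicable.
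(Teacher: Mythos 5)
Your proof is correct. It takes a slightly different, and in fact cleaner, route than the paper's. The paper's proof picks a basis $\{b_1,\ldots,b_r\}$ of $\cliqueker{Sp}$, writes $(I-J)b_i=Rc_i$, uses \Cref{kernel.split}(4) only to guarantee $\unovec^t b_i\neq 0$, then normalizes both relations to express $\unovec$ in two ways and subtracts, invoking $\nulo(R^t)\cap\im(R)=\{0\}$ a second time, explicitly, to force $b_1/(\unovec^t b_1)=b_2/(\unovec^t b_2)$ and reach a contradiction with linear independence. You instead package everything into the single observation that the functional $x\mapsto\unovec^t x$ is injective on the subspace $\cliqueker{Sp}$: the claim follows from \Cref{kernel.split.basic.facts}(2) together with \Cref{kernel.split}(4), and then $\dim(\cliqueker{Sp})\le 1$ is an immediate dimension count (your explicit computation with $z=c_2x_1-c_1x_2$ is just that count spelled out). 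The underlying machinery is the same — the orthogonality $\im(R)\cap\nulo(R^t)=\{0\}$ is what powers \Cref{kernel.split}(4) — but your version reuses the already-proved theorems rather than re-deriving the image/nullspace computation, avoids the explicit appearance of the vectors $c_i$ and of $\unovec$ itself, and makes transparent why the bound is exactly one: a subspace on which a linear functional into $\mathbb{R}$ is injective has dimension at most one. The paper's argument, by contrast, shows more concretely how the relation $(I-J)x_K=Rx_S$ and the image of $R$ enter, which is information reused later (e.g., in \Cref{clique.supp.nullity} and \Cref{basis.im.ker.split.supp.clique}), but as a proof of this lemma both are complete and correct.
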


\begin{proof}
	Since $\cliqueker{Sp}\neq \{0\}$, it is clear that $R\neq 0$. Let $\{b_1,\ldots,b_r\}$ be a basis for the subspace $\cliqueker{Sp}$. For each $i\in[r]$, we have 
	\[ (I-J)b_i=Rc_i, \] 
	for some $c_i\in\mathbb{R}^{|S|}$. Then, $(b_i,c_i)^t\in\nulo(Sp)$ by \Cref{kernel.split}, and hence $b_i\notin\nulo(J)$ by item (4) of \Cref{kernel.split} (since $\unovec^t b_i \neq 0$). 
	
	Now, assume that $\dim(\cliqueker{Sp})\geq 2$. Then we can write:
	\[ \frac{b_1}{\unovec^t b_1} - \frac{Rc_1}{\unovec^t b_1} = \unovec = \frac{b_2}{\unovec^t b_2} - \frac{Rc_2}{\unovec^t b_2}. \]
	
	Rearranging gives:
	\[ \frac{b_1}{\unovec^t b_1} - \frac{b_2}{\unovec^t b_2} = \frac{Rc_1}{\unovec^t b_1} - \frac{Rc_2}{\unovec^t b_2} \in \im(R). \]
	
	The left-hand side belongs to $\nulo(R^t)$ since $b_1, b_2 \in \cliqueker{Sp} \subset \nulo(R^t)$. Therefore,
	\[ \frac{b_1}{\unovec^t b_1} - \frac{b_2}{\unovec^t b_2} \in \nulo(R^t) \cap \im(R) = \{0\}, \]
	so
	\[ \frac{b_1}{\unovec^t b_1} = \frac{b_2}{\unovec^t b_2}, \]
	which implies $b_1$ and $b_2$ are linearly dependent, contradicting our assumption. Hence, $\dim(\cliqueker{Sp})=1$.
\end{proof}
%
%
%
Now we can show that the appearance of clique vertices in the support is an "all-or-nothing" phenomenon in a one-dimensional sense.
\begin{theorem}
	\label{clique.supp.characterization}
	Let $Sp(K,S)$ be a split graph such that $|K|\geq 2$. If $\cliqueker{Sp}\neq \{0\}$, then $\Supp(Sp)\cap K\neq\varnothing$ if and only if $\dim(\cliqueker{Sp})=1$.
\end{theorem}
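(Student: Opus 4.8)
The plan is to prove each direction of the biconditional by routing through the single condition $\cliqueker{Sp}\neq\{0\}$, which is the standing hypothesis; the whole statement then collapses to an application of \Cref{kernel.split.basic.facts}(3) together with \Cref{lemma.clique.support}. The only elementary point to keep in mind is that, since $V(Sp)=K\dot{\cup}S$, the assertion $\Supp(Sp)\cap K\neq\varnothing$ is literally the negation of $\Supp(Sp)\subset S$.

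For the forward implication I would assume $\Supp(Sp)\cap K\neq\varnothing$, hence $\Supp(Sp)\not\subset S$. By \Cref{kernel.split.basic.facts}(3) this forces $\cliqueker{Sp}\neq\{0\}$, and then \Cref{lemma.clique.support} — whose hypotheses $|K|\geq 2$ and $\cliqueker{Sp}\neq\{0\}$ are exactly met — yields $\dim(\cliqueker{Sp})=1$.

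For the reverse implication I would assume $\dim(\cliqueker{Sp})=1$, so in particular $\cliqueker{Sp}\neq\{0\}$. By \Cref{kernel.split.basic.facts}(3) again, now in the contrapositive direction, this gives $\Supp(Sp)\not\subset S$, i.e. $\Supp(Sp)\cap K\neq\varnothing$. Alternatively, one can make this concrete: pick $0\neq b\in\cliqueker{Sp}$; by \Cref{kernel.split.basic.facts}(2) there is $y$ with $(b,y)^t\in\nulo(Sp)$, and any index $v$ with $b_v\neq 0$ lies in $K$, so $v\in\Supp(Sp)\cap K$.

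I do not anticipate a genuine obstacle here: the substantive work has already been carried out in \Cref{lemma.clique.support} and \Cref{kernel.split.basic.facts}, and what remains is to assemble those pieces. Indeed, under the hypothesis $\cliqueker{Sp}\neq\{0\}$ both sides of the ``if and only if'' are in fact unconditionally true, so the equivalence is immediate once the two cited results are invoked; the only care needed is the bookkeeping translating between ``$\Supp(Sp)$ meets $K$'' and ``$\Supp(Sp)$ is not contained in $S$''.
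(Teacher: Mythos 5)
Your proposal is correct and follows essentially the same route as the paper: the paper's proof is exactly the combination of \Cref{kernel.split.basic.facts}(3) with \Cref{lemma.clique.support}, which is what you do. Your remark that, under the standing hypothesis $\cliqueker{Sp}\neq\{0\}$, both sides of the equivalence hold unconditionally is a fair (and slightly more explicit) way of phrasing the same assembly of those two results.
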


\begin{proof}
	We obtain the desired result by combining item (3) of \Cref{kernel.split.basic.facts} with \Cref{lemma.clique.support}. 
\end{proof}
%
%
%

\begin{theorem}
	\label{clique.supp.nullity}
	If $Sp(K,S)$ is a split graph with $\Supp(Sp)\cap K\neq\varnothing$, then 
	\[ \nul(Sp)=\nul(R)+1. \]
\end{theorem}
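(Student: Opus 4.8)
The plan is to realize $\nul(Sp)$ as a sum of two dimensions by projecting $\nulo(Sp)$ onto the clique coordinates, and then to pin down each summand using the facts already established. There is essentially one nontrivial input, namely that clique vertices can enter the support only ``one–dimensionally'' (\Cref{lemma.clique.support}); once that is granted the argument is pure bookkeeping.

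First I would dispose of the degenerate case $|K|=1$ directly: there $J-I$ is the $1\times1$ zero matrix, and a kernel vector $(x_K,x_S)^t$ with $x_K\neq0$ forces $R^tx_K=0$, i.e.\ $R=0$, so $\nul(Sp)=1+|S|=\nul(R)+1$. Hence assume $|K|\geq2$, so that $I-J$ is invertible and $\cliqueker{Sp}$ is defined. Since $\Supp(Sp)\cap K\neq\varnothing$, there is $(x_K,x_S)^t\in\nulo(Sp)$ with $x_K\neq0$; by \Cref{kernel.split.basic.facts}(1) this gives $x_K\in\cliqueker{Sp}$, so $\cliqueker{Sp}\neq\{0\}$, and then \Cref{lemma.clique.support} yields $\dim\cliqueker{Sp}=1$.

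Next I would introduce the linear map $\pi\colon\nulo(Sp)\to\mathbb{R}^{|K|}$, $(x_K,x_S)^t\mapsto x_K$. Its kernel is exactly $\{(0,x_S)^t\in\nulo(Sp)\}$, which by \Cref{kernel.split}(1) equals $\{(0,z)^t:z\in\nulo(R)\}$; since $z\mapsto(0,z)^t$ is an isomorphism, $\dim\ker\pi=\nul(R)$. For the image, \Cref{kernel.split.basic.facts}(1) gives $\operatorname{im}\pi\subset\cliqueker{Sp}$, while \Cref{kernel.split.basic.facts}(2) gives the reverse inclusion, so $\operatorname{im}\pi=\cliqueker{Sp}$ and $\dim\operatorname{im}\pi=1$.

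Finally, the rank--nullity theorem applied to $\pi$ gives
\[
\nul(Sp)=\dim\nulo(Sp)=\dim\ker\pi+\dim\operatorname{im}\pi=\nul(R)+1,
\]
as claimed. The main obstacle has already been cleared in \Cref{lemma.clique.support}; at this stage the only things to be careful about are the correct identification of $\ker\pi$ with $\nulo(R)$ and of $\operatorname{im}\pi$ with $\cliqueker{Sp}$, and not overlooking the trivial case $|K|=1$, for which $\cliqueker{Sp}$ is not even defined.
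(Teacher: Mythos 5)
Your proposal is correct and follows essentially the same route as the paper: both arguments hinge on $\dim(\cliqueker{Sp})=1$ (via \Cref{lemma.clique.support}) together with the identification of the clique-zero kernel vectors with $\nulo(R)$, and your rank--nullity count for the projection onto the clique coordinates is just a cleaner packaging of the paper's explicit basis-extension. A minor bonus on your side: you treat $|K|=1$ separately, a case the paper's proof silently skips, since its use of $\cliqueker{Sp}$ and $(I-J)^{-1}$ requires $|K|\geq 2$.
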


\begin{proof}
	Let $\mathcal{X}=\{(0,x)^t:x\in\nulo(R)\}$. Since $\mathcal{X}\subset\nulo(Sp)$ by \Cref{kernel.split}(2), we can extend a basis $B_{\mathcal{X}}$ for $\mathcal{X}$ to a basis for $\nulo(Sp)$ to span those kernel vectors $(x_K,x_S)^t$ with $x_K\neq 0$.
	
	We know from \Cref{clique.supp.characterization} that $\cliqueker{Sp}=\langle z_0\rangle$, where $z_0=(I-J)^{-1}Ry$, for some $y$. Since $\nulo((I-J)^{-1}R)=\nulo(R)$ and $z_0\neq 0$, such a vector $y$ belongs to $y_0+\nulo(R)$, where $y_0$ is fixed and $y_0\notin\nulo(R)$.
	Then, by items (1) and (2) of \Cref{kernel.split.basic.facts}, every vector $s\in\nulo(Sp)\setminus\mathcal{X}$ has the form
	\[ s=c\binom{z_0}{y_0}+\binom{0}{x_0}, \]
	for some $c\in\mathbb{R}\setminus 0$ and some $x_0\in\nulo(R)$. 
%
	Therefore, $B_{\mathcal{X}}\dot{\cup}\{(z_0,y_0)^t\}$ is a basis for $\nulo(Sp)$.
\end{proof}
%
%
%

\begin{corollary}
	If $Sp(K,S)$ is a split graph with $|K|\geq 2$, then $\dim(\cliqueker{Sp})$ $\leq 1$ and
	\[ \nul(Sp)=
	\begin{cases}
		\nul(R), & \text{if } \dim(\cliqueker{Sp})=0 \\
		\nul(R)+1, & \text{if } \dim(\cliqueker{Sp})=1.
	\end{cases}
	\]
\end{corollary}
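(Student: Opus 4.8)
The plan is to assemble this corollary entirely from the structural results already in hand, since it is really a bookkeeping statement tying together the clique-kernel, the support, and the nullity. First I would dispose of the inequality $\dim(\cliqueker{Sp}) \leq 1$: by \Cref{lemma.clique.support}, whenever $\cliqueker{Sp} \neq \{0\}$ its dimension is exactly $1$, so the only possibilities are $\dim(\cliqueker{Sp}) = 0$ (equivalently $\cliqueker{Sp} = \{0\}$) and $\dim(\cliqueker{Sp}) = 1$. This makes the case split in the formula genuinely exhaustive.

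Next I would handle the two cases separately. If $\dim(\cliqueker{Sp}) = 0$, then $\cliqueker{Sp} = \{0\}$, so \Cref{kernel.split.basic.facts}(3) gives $\Supp(Sp) \subset S$, and then \Cref{kernel.split}(3) gives the equality $\nul(Sp) = \nul(R)$. If instead $\dim(\cliqueker{Sp}) = 1$, then $\cliqueker{Sp} \neq \{0\}$, so the same equivalence in \Cref{kernel.split.basic.facts}(3) gives $\Supp(Sp) \not\subset S$; since $\Supp(Sp) \subseteq V(Sp) = K \,\dot\cup\, S$, this forces $\Supp(Sp) \cap K \neq \varnothing$, and \Cref{clique.supp.nullity} then yields $\nul(Sp) = \nul(R) + 1$. (Alternatively one can cite \Cref{clique.supp.characterization} to pass between $\dim(\cliqueker{Sp}) = 1$ and $\Supp(Sp) \cap K \neq \varnothing$, but the route through \Cref{kernel.split.basic.facts}(3) is more direct.)

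There is essentially no obstacle here: the work has all been done in \Cref{lemma.clique.support} (the at-most-one-dimensional bound) and in \Cref{clique.supp.nullity} (the $\nul(R)+1$ formula). The only point meriting a line of care is confirming that the dichotomy is driven precisely by $\dim(\cliqueker{Sp})$ and nothing finer — but \Cref{lemma.clique.support} rules out any dimension $\geq 2$, so the partition into $\dim = 0$ and $\dim = 1$ is complete and the stated piecewise formula follows at once.
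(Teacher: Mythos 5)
Your proposal is correct and follows essentially the same route as the paper: the bound from \Cref{lemma.clique.support}, then \Cref{kernel.split.basic.facts}(3) with \Cref{kernel.split}(3) in the trivial case, and \Cref{clique.supp.nullity} in the one-dimensional case. The only cosmetic difference is that you deduce $\Supp(Sp)\cap K\neq\varnothing$ directly from the equivalence in \Cref{kernel.split.basic.facts}(3) rather than citing \Cref{clique.supp.characterization}, which is an equally valid shortcut.
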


\begin{proof}
	By \Cref{lemma.clique.support}, $\dim(\cliqueker{Sp})\leq 1$. If $\dim(\cliqueker{Sp})=0$, then by \Cref{kernel.split.basic.facts}(3), $\Supp(Sp)\subset S$, and hence $\nul(Sp)=\nul(R)$ by \Cref{kernel.split}(3). If $\dim(\cliqueker{Sp})=1$, then by \Cref{clique.supp.characterization}, $\Supp(Sp)\cap K\neq\varnothing$, and hence $\nul(Sp)=\nul(R)+1$ by \Cref{clique.supp.nullity}.
\end{proof}

%
%
%

%
%
%

\section{Split graphs with nullity 1}\label{sec:null.1}

Having determined the general structure of the nullspace, we now specialize in split graphs whose nullity is exactly one. This condition imposes a remarkable rigidity on the graph's structure, forcing a highly constrained relationship between the kernel, the adjugate of the adjacency matrix, and the graph's support. 

In this section, we leverage the classical properties of the adjugate for singular matrices to derive precise combinatorial descriptions of these graphs, culminating in formulas that link the coordinates of the unique kernel vector to the determinants of principal submatrices.

\begin{figure}[h]
	\centering
	\begin{tikzpicture}[scale=2, every node/.style={scale=1.3, circle, draw, thick, minimum size=6pt, inner sep=2pt}]
		\foreach \i in {0,...,5} {
			\node[fill=black] (k\i) at ({cos(60*\i)}, {sin(60*\i)}) {};
		}
		
		\foreach \i in {0,...,5} {
			\foreach \j in {\i,...,5} {
				\ifnum\i<\j
				\draw[thick] (k\i) -- (k\j);
				\fi
			}
		}
		
		\node[fill=white] (i1) at (-2, 0.8) {};
		\node[fill=white] (i2) at (-2, -0.2) {};
		\node[fill=white] (i3) at (2, 0.4) {};
		\node[fill=white] (i4) at (2, -0.6) {};
		
		\draw (k0) -- (i1);
		\draw (k0) -- (i2);
		\draw (k0) -- (i3);
		
		\draw (k1) -- (i2);
		\draw (k1) -- (i3);
		\draw (k1) -- (i4);
		
		\draw (k2) -- (i3);
		
		\draw (k3) -- (i1);
		\draw (k3) -- (i4);
		
		\draw (k4) -- (i2);
		
		\draw (k5) -- (i2);
	\end{tikzpicture}
	\caption{Split graph with supported clique vertices and nullity one.}
	\label{fig:splitR}
\end{figure}
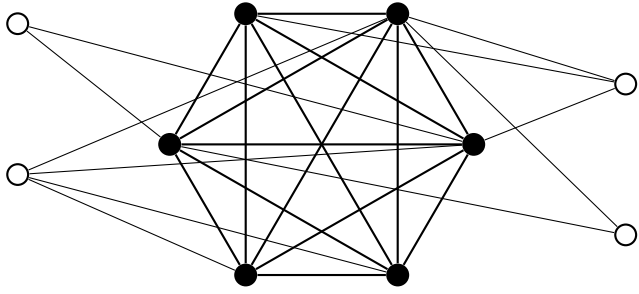

\begin{figure}[h]
	\centering
	\begin{tikzpicture}[scale=1.5, every node/.style={circle, draw, thick, minimum size=6pt, inner sep=2pt}]
		\node[fill=white, label=above:$1$] (i7)  at (0, 2) {7};
		\node[fill=white, label=above:$2$] (i8)  at (1.5, 2) {8};
		\node[fill=white, label=above:$1$] (i9)  at (3, 2) {9};
		\node[fill=white, label=above:$1$] (i10) at (4.5, 2) {10};
		
		\node[fill=black, text=white, label=below:$1$] (k1) at (0, 0) {1};
		\node[fill=black, text=white, label=below:$1$] (k2) at (1, 0) {2};
		\node[fill=black, text=white, label=below:$-2$] (k3) at (2, 0) {3};
		\node[fill=black, text=white, label=below:$-1$] (k4) at (3, 0) {4};
		\node[fill=black, text=white, label=below:$-1$] (k5) at (4, 0) {5};
		\node[fill=black, text=white, label=below:$-1$] (k6) at (5, 0) {6};
		
		\draw (k1) -- (i7);
		\draw (k1) -- (i8);
		\draw (k1) -- (i9);
		
		\draw (k2) -- (i8);
		\draw (k2) -- (i9);
		\draw (k2) -- (i10);
		
		\draw (k3) -- (i9);
		
		\draw (k4) -- (i7);
		\draw (k4) -- (i10);
		
		\draw (k5) -- (i8);
		
		\draw (k6) -- (i8);
	\end{tikzpicture}
	\caption{Bipartite representation of the split graph $Sp(K,S)$ of \Cref{fig:splitR}. The upper row corresponds to $S$ and the lower row to $K$ (black vertices). Since $\nulo(Sp)=\langle z\rangle$, the coordinates of $z$ are shown.}
	\label{fig:splitR_bipartito}
\end{figure}
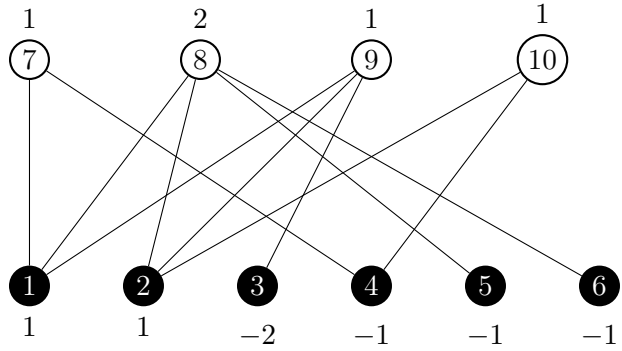

\begin{figure}[h]
	\label{fig:splitR_bipartito2}
	\centering
	\begin{tikzpicture}[scale=1.4, every node/.style={circle, draw, thick, minimum size=6pt, inner sep=2pt}]
		\node[fill=white, label=above:$-1$] (i6) at (0, 2) {6};
		\node[fill=white, label=above:$-1$] (i7) at (1.5, 2) {7};
		\node[fill=white, label=above:$1$] (i8) at (3, 2) {8};
		\node[fill=white, label=above:$-1$] (i9) at (4.5, 2) {9};
		
		\node[fill=black, text=white, label=below:$1$]  (k1) at (0, 0) {1};
		\node[fill=black, text=white, label=below:$0$] (k2) at (1.2, 0) {2};
		\node[fill=black, text=white, label=below:$1$] (k3) at (2.4, 0) {3};
		\node[fill=black, text=white, label=below:$1$]  (k4) at (3.6, 0) {4};
		\node[fill=black, text=white, label=below:$-1$]  (k5) at (4.8, 0) {5};
		
		\draw (k1) -- (i6);
		
		\draw (k2) -- (i6);
		\draw (k2) -- (i7);
		\draw (k2) -- (i8);
		\draw (k2) -- (i9);
		
		\draw (k3) -- (i7);
		
		\draw (k4) -- (i9);
		
		\draw (k5) -- (i6);
		\draw (k5) -- (i7);
		\draw (k5) -- (i9);
	\end{tikzpicture}
	\caption{Bipartite representation of a split graph  (clique vertices in black) with nullity one. The numbers next to each vertex indicate the coordinates of ``the'' spanning kernel vector.}
\end{figure}
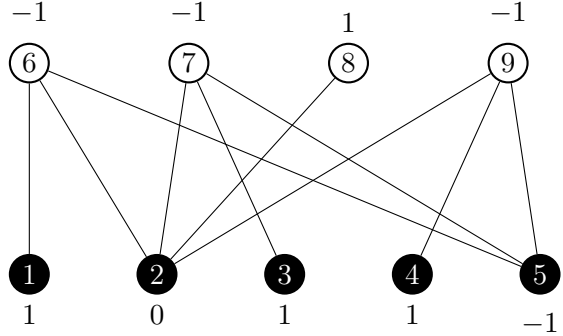
%
%
%

\begin{lemma}[\cite{horn2012matrix}]
	\label{lemma.adj.nonzero}
	If $A$ is a square matrix, then $\adj(A)\!\neq \!0$ if and only if $\nul(A)\!\leq\! 1$. 
\end{lemma}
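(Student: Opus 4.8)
The plan is to reduce the statement to the classical fact that the rank of a square matrix equals the largest order of a nonvanishing minor. Write $n$ for the order of $A$. First I would recall the definition of the adjugate: the $(i,j)$ entry of $\adj(A)$ is the $(j,i)$ cofactor of $A$, that is, $(-1)^{i+j}$ times the determinant of the $(n-1)\times(n-1)$ submatrix of $A$ obtained by deleting row $j$ and column $i$. Hence $\adj(A)\neq 0$ if and only if at least one $(n-1)\times(n-1)$ minor of $A$ is nonzero. The degenerate case $n=1$ is handled separately by the convention $\adj(A)=[1]\neq 0$, together with the fact that $\nul(A)\le 1$ always holds for a $1\times 1$ matrix; so one may assume $n\ge 2$.

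Next I would invoke the standard characterization of rank: $\rank(A)$ equals the largest integer $k$ such that $A$ has a nonzero $k\times k$ minor. This turns the condition ``some $(n-1)\times(n-1)$ minor is nonzero'' into ``$\rank(A)\ge n-1$''. Finally, the rank--nullity theorem gives $\nul(A)=n-\rank(A)$, so $\rank(A)\ge n-1$ holds precisely when $\nul(A)\le 1$. Chaining the two equivalences yields $\adj(A)\neq 0 \iff \nul(A)\le 1$, as claimed.

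The only step carrying real content is the rank characterization: a matrix of rank $r$ has a nonvanishing $r\times r$ minor, while every square submatrix of order larger than $r$ is singular. I would establish this by choosing $r$ linearly independent columns of $A$, then $r$ rows among them yielding an $r\times r$ submatrix with independent columns and hence nonzero determinant; conversely, any $r+1$ columns of $A$ are linearly dependent, and this dependence is inherited by any $(r+1)\times(r+1)$ submatrix built from them, forcing its determinant to vanish. Everything else is merely unwinding the definition of the adjugate, so it is this minor-versus-rank lemma where I expect the bulk of the work to lie --- though, as the citation to \cite{horn2012matrix} indicates, it is entirely standard.
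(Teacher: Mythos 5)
Your proposal is correct and follows essentially the same route as the paper: identify $\adj(A)\neq 0$ with the existence of a nonzero minor of order $n-1$, translate that into $\rank(A)\geq n-1$ via the minor characterization of rank, and conclude with rank--nullity. The paper simply takes the rank-equals-largest-invertible-submatrix characterization as given, whereas you also sketch its proof and treat the $n=1$ convention explicitly, but these are only minor differences in level of detail.
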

While this is a standard result (see \cite{horn2012matrix}), we include a concise proof here as this lemma plays a crucial role in our characterization of split graphs with nullity one, and we wish to keep our presentation self-contained.
\begin{proof}
	Let $n$ be the order of $A$. The fact that $\adj(A)$ has at least a nonzero entry means that $A$ contains a nonzero minor of order $n-1$. This is equivalent to $\rank(A)\geq n-1$, because $\rank(A)$ is, by definition, the maximum size of an invertible submatrix of $A$.
\end{proof}

%
%
%

\begin{lemma}
	\label{null=1.adj(S).columns}
	Let $A$ be a square matrix. If $\nul(A)=1$, then $\rank(\adj(A))=1$ and every column of $\adj(A)$ is a vector of $\nulo(A)$.	   
\end{lemma}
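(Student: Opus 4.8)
The plan is to lean on the fundamental adjugate identity $A\,\adj(A)=\adj(A)\,A=\det(A)\,I$ together with the preceding \Cref{lemma.adj.nonzero}. The argument splits naturally into two parts matching the two conclusions of the statement.

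First I would observe that $\nul(A)=1$ forces $A$ to be singular, hence $\det(A)=0$. Substituting into the identity gives $A\,\adj(A)=0$. Reading this column by column, for each index $j$ the column $\adj(A)_{*j}$ satisfies $A\,\adj(A)_{*j}=0$, i.e.\ $\adj(A)_{*j}\in\nulo(A)$. This establishes the second assertion: every column of $\adj(A)$ lies in $\nulo(A)$. In particular the column space $\im(\adj(A))$ is contained in $\nulo(A)$, so $\rank(\adj(A))=\dim\im(\adj(A))\le\dim\nulo(A)=\nul(A)=1$.

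For the rank equality, I would invoke \Cref{lemma.adj.nonzero}: since $\nul(A)=1\le 1$, that lemma guarantees $\adj(A)\ne 0$, hence $\rank(\adj(A))\ge 1$. Combined with the upper bound just obtained, $\rank(\adj(A))=1$. This simultaneously shows that the columns of $\adj(A)$, being nonzero and all confined to the one-dimensional space $\nulo(A)$, actually span it.

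There is no serious obstacle here; the result is a routine consequence of the adjugate identity once \Cref{lemma.adj.nonzero} is available. The only points requiring a modicum of care are making sure one uses the identity in the form $A\,\adj(A)=\det(A)\,I$ (so that the \emph{columns}, not the rows, of $\adj(A)$ are annihilated by $A$) and correctly citing \Cref{lemma.adj.nonzero} to rule out the degenerate possibility $\adj(A)=0$.
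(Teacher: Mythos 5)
Your proof is correct and follows essentially the same route as the paper: use $A\,\adj(A)=\det(A)I=0$ to place every column of $\adj(A)$ in $\nulo(A)$, deduce $\rank(\adj(A))\le\nul(A)=1$, and invoke \Cref{lemma.adj.nonzero} for $\adj(A)\neq 0$. The only slight overstatement is your closing remark that the columns are all nonzero --- some columns of $\adj(A)$ may well be zero (only at least one is nonzero); this is irrelevant to the lemma as stated.
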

This also is a standard result in linear algebra; we include the proof for completeness.
\begin{proof}
	Let $n$ be the order of $A$ and let $\adj(A)=[a_1 \mid \ldots \mid a_n]$, where $a_i$ denotes the $i$-th column. Since $\det(A)=0$, the fundamental identity $\det(A)I=A\adj(A)$ implies that
	\[
	0=A\adj(A)=[Aa_1 \mid \ldots \mid Aa_n].
	\]
	So, $a_i\in\nulo(A)$ for all $i\in[n]$, from which we deduce that $\rank(\adj(A))\leq 1$, because $\nul(A)=1$. On the other hand, we know by \Cref{lemma.adj.nonzero} that $\rank(\adj(A))\neq 0$. Therefore, $\rank(\adj(A))=1$.
\end{proof}

If $A$ is a symmetric matrix, recall that $\adj(A)$ is symmetric as well.

\begin{theorem}
	\label{G.nul=1.adj.G-v.supp.x_v^2}
	Let $G$ be a graph with $\nul(G)=1$ and $\Supp(G)=[s]$, for some $s\in\mathbb{N}$. Then, we have the following:
	\begin{enumerate}
		\item $G-v$ is singular if and only if $\adj(G)_{vv}= 0$;
		\item $\adj(G)_{uv}\neq 0$ if and only if $u,v\in\Supp(G)$;
		\item $v\in\Supp(G)$ if and only if $G-v$ is nonsingular;
		\item If $(x,0)^t\in\nulo(G)$, with $x\in\mathbb{R}^s\setminus 0$, then 
		\[ x_v^2=\det(G-v)\det(G-u), \]
		for each $v\in\Supp(G)$ and some fixed $u\in\Supp(G)$.
	\end{enumerate}
\end{theorem}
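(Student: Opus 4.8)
The plan is to route everything through the adjugate of $A(G)$, which I abbreviate $\adj(G)$. The only computational fact needed at the outset is that $\adj(G)_{vv}$ equals the $(v,v)$ cofactor of $A(G)$, and deleting row and column $v$ from $A(G)$ leaves exactly $A(G-v)$; hence $\adj(G)_{vv}=\det(G-v)$. Item (1) is then immediate, since $G-v$ is singular exactly when $\det(G-v)=0$.

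Next I would pin down the matrix $\adj(G)$ as a whole. Since $\nul(G)=1$ and $\Supp(G)=[s]$, the nullspace is the line $\nulo(G)=\langle\tilde x\rangle$ with $\tilde x=(x,0)^{t}$ and $x\in\mathbb{R}^{s}$. By \Cref{lemma.adj.nonzero}, $\adj(G)\neq 0$; by \Cref{null=1.adj(S).columns}, $\rank(\adj(G))=1$ and every column of $\adj(G)$ lies in $\nulo(G)$; and $\adj(G)$ is symmetric because $A(G)$ is. A symmetric matrix of rank one equals $\mu\,ww^{t}$ for some $\mu\neq 0$ and some $w$ spanning its column space, so, since the columns of $\adj(G)$ span $\langle\tilde x\rangle$,
\[
\adj(G)=\mu\,\tilde x\,\tilde x^{\,t},\qquad \mu\in\mathbb{R}\setminus\{0\}.
\]
Reading this entrywise, $\adj(G)_{uv}=\mu\,\tilde x_u\tilde x_v$ is nonzero iff $\tilde x_u\neq 0$ and $\tilde x_v\neq 0$, that is, iff $u,v\in\Supp(G)$; this is item (2). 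Taking $u=v$ and using $\adj(G)_{vv}=\det(G-v)$ gives item (3): $v\in\Supp(G)\iff\adj(G)_{vv}\neq 0\iff\det(G-v)\neq 0\iff G-v$ is nonsingular.

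For item (4) the key identity is the rank-one symmetric relation $\adj(G)_{uv}^{\,2}=\adj(G)_{uu}\,\adj(G)_{vv}$, which is immediate from $\adj(G)=\mu\tilde x\tilde x^{t}$ because both sides equal $\mu^{2}\tilde x_u^{2}\tilde x_v^{2}$. Combined with $\adj(G)_{ww}=\det(G-w)$ this reads $\adj(G)_{uv}^{\,2}=\det(G-u)\det(G-v)$. It then remains only to identify the coordinates of $x$ with a column of $\adj(G)$: for a fixed $u\in\Supp(G)$ the column $\adj(G)_{*u}$ is a nonzero element of $\nulo(G)$ supported in $[s]$, so choosing $(x,0)^{t}:=\adj(G)_{*u}$ gives $x_v=\adj(G)_{vu}$ and therefore $x_v^{2}=\det(G-v)\det(G-u)$ for every $v\in\Supp(G)$.

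The one place I expect friction is the normalization implicit in item (4): since $\nulo(G)$ is one-dimensional, $x$ is determined only up to a scalar, whereas the displayed formula is not scale-invariant, so the statement has to be understood for the canonical representative $x=\adj(G)_{*u}$ (equivalently, for $x$ rescaled so that $x_u^{2}=\det(G-u)^{2}$). Apart from that, everything is routine: items (1)--(3) are direct consequences of the cofactor formula $\adj(G)_{vv}=\det(G-v)$ together with the rank-one decomposition of $\adj(G)$, and the only structural step is justifying that decomposition, which follows from \Cref{lemma.adj.nonzero}, \Cref{null=1.adj(S).columns}, and the symmetry of $\adj(G)$.
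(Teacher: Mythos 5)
Your proof is correct and follows essentially the same route as the paper: items (1)--(3) via the cofactor identity $\adj(G)_{vv}=\det(G-v)$ combined with the rank-one symmetric structure of $\adj(G)$ coming from \Cref{lemma.adj.nonzero}, \Cref{null=1.adj(S).columns} and symmetry, and item (4) by reading entries off the factorization $\adj(G)=\mu\,\tilde x\tilde x^{t}$. Your normalization caveat for item (4) is well taken, and it matches what the paper does implicitly: its proof also fixes the representative (taking $x$ to be the column of $\adj(G)$ indexed by $u$, so that $x_u=\det(G-u)$), so the displayed identity holds only for that scaling, exactly as you observe.
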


\begin{proof}
	\begin{enumerate}[(1)]
		\item By removing row $v$ and column $v$ from $A(G)$ we obtain $A(G-v)$. Since $\adj(G)_{vv}=(-1)^{v+v}\det(G-v)=\det(G-v)$, we get the desired equivalence.
		
		\item Let $\nulo(G)=\langle z\rangle$ and $s=|\Supp(G)|$. First, we label $V(G)$ using the ordering $(\Supp(G),V(G)\setminus\Supp(G))$. By this labeling, $z=(x,0)^t$, where $x\in\mathbb{R}^{s}$ and $x_v\neq 0$ for all $v\in[s]$. Then, by \Cref{null=1.adj(S).columns}, we can write
		\[  
		\adj(G)=
		\begin{pmatrix}
			Z & 0 \\
			0 & 0
		\end{pmatrix},
		\]
		where 
		\[ Z=[x \mid c_2x \mid \ldots \mid c_sx]=x(1,c_2,\ldots,c_s), \]
		for some nonzero constants $c_2,\ldots,c_s$. So, we see that $\adj(G)_{uv}\neq 0$ if and only if $u,v\in\Supp(G)$.
		
		\item It follows from (1) and (2).
		
		\item Since $A(G)$ is symmetric, $\adj(G)$ is symmetric, so $Z$ must be symmetric. From the structure of $Z=x(1,c_2,\ldots,c_s)$, symmetry implies that $c_i = \frac{x_i}{x_1}$ for all $i\in[s]$. Therefore, $Z = x_1^{-1}xx^t$. 
		
		Fix $u\in\Supp(G)$ as the first vertex in our ordering, so $x_1=\adj(G)_{uu}=\det(G-u)$. Then for each $v\in\Supp(G)$:
		\[ \det(G-v)=\adj(G)_{vv}=Z_{vv}=x_1^{-1}x_v^2=\det(G-v_1)^{-1}x_v^2. \]
		Rearranging gives $x_v^2=\det(G-v)\det(G-u)$. \qedhere
	\end{enumerate}
\end{proof}

%
%
%
The former result allow us to prove that for split graph with nullity one all the ``interesting'' kernel structure comes from the clique-support phenomenon, not from $R$ itself.
\begin{corollary}
	\label{null(S)=1.iff.ker(R)=0}
	Let $Sp(K,S)$ be a split graph such that $\Supp(Sp)\cap K\neq\varnothing$, with $|K|\geq 2$. Then, $\nul(Sp)=1$ if and only if $\nulo(R)=\{0\}$.
\end{corollary}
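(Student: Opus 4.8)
The plan is to reduce the statement directly to \Cref{clique.supp.nullity}. Under the standing hypothesis $\Supp(Sp)\cap K\neq\varnothing$, that theorem already yields the identity $\nul(Sp)=\nul(R)+1$, so the claimed equivalence becomes purely arithmetic: $\nul(Sp)=1$ holds precisely when $\nul(R)=0$, and $\nul(R)=0$ is by definition the same as $\nulo(R)=\{0\}$.

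Concretely, I would first invoke \Cref{clique.supp.nullity} to write $\nul(Sp)=\nul(R)+1$. For the forward implication, assume $\nul(Sp)=1$; substituting into this identity gives $\nul(R)=0$, hence $\nulo(R)=\{0\}$. For the reverse implication, assume $\nulo(R)=\{0\}$; then $\nul(R)=0$ and the identity gives $\nul(Sp)=0+1=1$.

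The hypothesis $|K|\geq 2$ plays no active role in this deduction beyond placing us in the regime where $\cliqueker{Sp}$ is defined and the surrounding results (in particular \Cref{lemma.clique.support} and \Cref{clique.supp.nullity}) apply cleanly; the genuine content has already been established in \Cref{clique.supp.nullity}, whose proof rests on the one-dimensionality of $\cliqueker{Sp}$. Consequently there is no real obstacle here — the corollary is essentially \Cref{clique.supp.nullity} specialized to nullity one. (If one wished to avoid citing \Cref{clique.supp.nullity} directly, one could instead argue via \Cref{kernel.split.basic.facts}: the hypothesis forces $\cliqueker{Sp}=\langle z_0\rangle$ with $z_0\neq 0$, and every kernel vector of $Sp$ is then determined by its clique part up to addition of an element of $\{(0,x)^t:x\in\nulo(R)\}$, so $\nul(Sp)=1$ iff that complementary space is trivial, i.e. iff $\nulo(R)=\{0\}$; but this merely re-derives \Cref{clique.supp.nullity} and adds nothing.)
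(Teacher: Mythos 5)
Your proposal is correct and follows exactly the paper's own argument: invoke \Cref{clique.supp.nullity} to get $\nul(Sp)=\nul(R)+1$ under the hypothesis $\Supp(Sp)\cap K\neq\varnothing$, and then the equivalence $\nul(Sp)=1 \iff \nulo(R)=\{0\}$ is immediate arithmetic. Nothing further is needed.
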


\begin{proof}
	By \Cref{clique.supp.nullity}, since $\Supp(Sp)\cap K\neq\varnothing$, we have $\nul(Sp)=\nul(R)+1$. Therefore, $\nul(Sp)=1$ if and only if $\nul(R)=0$.
\end{proof}
%
%
%

\begin{lemma}
	\label{A.null=1.full.supported}
	Let $A$ be an $m\times n$ matrix such that $n\geq 2$, $\nul(A)=1$, and $A_{*j}\neq 0$ for all $j\in[n]$. If $z\in\nulo(A)\setminus 0$ and $z$ has no zero entries, then every set of $n-1$ columns of $A$ is a basis for $\im(A)$.
\end{lemma}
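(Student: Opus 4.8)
The plan is to combine the rank--nullity theorem with the one-dimensionality of $\nulo(A)$, which reduces the statement to a very short argument about coefficient vectors. Since $\nul(A)=1$ and $A$ has $n$ columns, $\rank(A)=n-1$, so $\dim(\im(A))=n-1$ and $\nulo(A)=\langle z\rangle$. Any set of $n-1$ columns of $A$ is contained in $\im(A)$ and has exactly $n-1$ elements, so it will be a basis for $\im(A)$ as soon as we know it is linearly independent. Hence the whole problem is to prove that arbitrary $n-1$ columns of $A$ are linearly independent.

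To that end I would fix an index $k\in[n]$ and look at the columns $\{A_{*j}:j\in[n]\setminus\{k\}\}$. Suppose for contradiction that $\sum_{j\ne k}\lambda_j A_{*j}=0$ for scalars $\lambda_j$ not all zero. I would then complete $\lambda$ to a vector in $\mathbb{R}^{n}$ by setting $\lambda_k=0$; the relation becomes $A\lambda=0$, so $\lambda\in\nulo(A)=\langle z\rangle$, say $\lambda=cz$. Because $\lambda\ne 0$ we have $c\ne 0$, and therefore $z_k=\lambda_k/c=0$, contradicting the assumption that $z$ has no zero entries. Thus $\lambda=0$, i.e.\ the columns indexed by $[n]\setminus\{k\}$ are linearly independent, and since they are $n-1$ independent vectors inside the $(n-1)$-dimensional space $\im(A)$ they form a basis of $\im(A)$. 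As $k$ was arbitrary, every choice of $n-1$ columns works.

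There is no genuine obstacle here; the one step that needs a moment of care is the bookkeeping that turns a linear dependence among the columns indexed by $[n]\setminus\{k\}$ into a kernel vector whose $k$-th coordinate is forced to vanish, which is precisely what is incompatible with $z_k\ne 0$. (The hypotheses $n\ge 2$ and $A_{*j}\ne 0$ for all $j$ serve only to keep the statement non-vacuous and are, in fact, consistent with — indeed partly implied by — the existence of a nowhere-zero vector in $\nulo(A)$ when $\nul(A)=1$.)
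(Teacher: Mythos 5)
Your proof is correct and rests on the same two ingredients as the paper's: the nowhere-zero kernel vector spanning the one-dimensional nullspace, together with the count $\rank(A)=n-1$. The only cosmetic difference is the direction of the argument: you show any $n-1$ columns are linearly independent (a dependence would produce a kernel vector with vanishing $k$-th entry, impossible since $\nulo(A)=\langle z\rangle$ with $z$ nowhere zero), whereas the paper solves $Az=0$ for the deleted column to show the remaining columns span $\im(A)$; both conclude by matching cardinality to dimension.
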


\begin{proof}
	Since $z\in\nulo(A)$ has no zero entries, for each $k\in[n]$ we have $A_{*k} = -\frac{1}{z_k}\sum_{j\neq k} z_jA_{*j}$, so each $B_i=\{A_{*j}:j\neq i\}$ spans $\im(A)$. Since $\rank(A)=n-1=|B_i|$, each $B_i$ is a basis for $\im(A)$.
\end{proof}
%
%
%

\begin{lemma}
	\label{basis.Im(A).null=1}
	Let $A$ be an $m\times n$ matrix with $|\Supp(A)|\geq 2$, $\nul(A)=1$, and $A_{*j}\neq 0$ for all $j\in[n]$. If $i\in\Supp(A)$, define $B_i=\{A_{*j}:j\in\Supp(A),j\neq i\}$. Then, 
	\[ B = B_i \cup \{ A_{*j} : j\notin\Supp(A) \} \] 
	is a basis for $\im(A)$, for each $i\in\Supp(A)$.
\end{lemma}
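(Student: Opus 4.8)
The plan is to recognize that the proposed set $B$ is nothing other than the family of \emph{all} columns of $A$ except $A_{*i}$: indeed $B_i$ consists of the support columns different from $A_{*i}$, while $\{A_{*j}:j\notin\Supp(A)\}$ consists of the remaining (non-support) columns, and together these index exactly $[n]\setminus\{i\}$. Thus the statement reduces to showing that deleting column $i$ from $A$ still leaves a spanning set of $\im(A)$, and then closing with a dimension count. This is the natural generalization of \Cref{A.null=1.full.supported} from the fully supported case to the general one, and the mechanism is identical: a column indexed by a support position is a linear combination of the other support columns, forced by the essentially unique kernel vector.

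Concretely, first I would fix a generator $z$ with $\nulo(A)=\langle z\rangle$, which exists because $\nul(A)=1$. By the definition of support, $\Supp(A)=\{j\in[n]:z_j\neq 0\}$, so $z_j=0$ for every $j\notin\Supp(A)$, whereas $z_i\neq 0$ since $i\in\Supp(A)$. Expanding $Az=0$ and discarding the vanishing terms gives $\sum_{j\in\Supp(A)}z_jA_{*j}=0$, and solving for the $i$-th summand yields
\[
A_{*i}=-\frac{1}{z_i}\sum_{\substack{j\in\Supp(A)\\ j\neq i}}z_jA_{*j}\ \in\ \langle B_i\rangle\ \subset\ \langle B\rangle .
\]
Every other column $A_{*j}$ with $j\neq i$ already lies in $B$ (in $B_i$ when $j\in\Supp(A)$, in the second set when $j\notin\Supp(A)$), so $\langle B\rangle\supset\langle A_{*1},\ldots,A_{*n}\rangle=\im(A)$, and the reverse inclusion is obvious; hence $\langle B\rangle=\im(A)$. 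Finally $B$ is assembled from the $|\Supp(A)|-1$ vectors of $B_i$ together with the $n-|\Supp(A)|$ non-support columns, so $|B|\leq n-1$; since $\nul(A)=1$ gives $\rank(A)=n-1=\dim\im(A)$, a spanning set of $\im(A)$ of size at most $\dim\im(A)$ must in fact be a basis. As $i\in\Supp(A)$ was arbitrary, this proves the claim.

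I do not expect a genuine obstacle: the proof is short and parallels \Cref{A.null=1.full.supported}. The only step that deserves a moment's attention is the bookkeeping and counting — verifying that the two constituent pieces of $B$ are disjoint in index and exhaust $[n]\setminus\{i\}$, and that any accidental coincidences among the columns do not corrupt the size estimate. The latter is automatic, because once $\langle B\rangle=\im(A)$ is established and this space has dimension $n-1$, the set $B$ is forced to contain (at least, hence exactly) $n-1$ linearly independent vectors. (Alternatively, one can apply \Cref{A.null=1.full.supported} directly to the submatrix of $A$ whose columns are indexed by $\Supp(A)$, after checking that this submatrix again has nullity $1$; the route sketched above is slightly shorter.)
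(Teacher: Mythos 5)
Your proof is correct and follows essentially the same route as the paper: express the deleted support column via the (essentially unique) kernel vector, which vanishes off $\Supp(A)$, and then conclude by the count $\rank(A)=n-1$. The paper simply packages the spanning step by citing \Cref{A.null=1.full.supported} applied to the support columns, which is the alternative you yourself note at the end, so there is no substantive difference.
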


\begin{proof}
	From \Cref{A.null=1.full.supported}, we know that $B_i$ spans the subspace $\langle \{A_{*j}:j\in\Supp(A)\} \rangle$. With this in mind, it is clear that $B$ spans $\im(A)$. Since $|B|=n-1=\rank(A)$, we can finally conclude that $B$ is a basis for $\im(A)$.
%
%
\end{proof}
%
%
%

\begin{theorem}
	\label{basis.Im(G).null=1}
	Let $G$ be a graph without isolated vertices. If $\nul(G)=1$ and $s\in\Supp(G)$, then 
	\[ B_s = \{A(G)_{*j} : j \neq s\} \]
	is a basis for $\im(G)$.
\end{theorem}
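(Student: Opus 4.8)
The plan is to deduce the statement from \Cref{basis.Im(A).null=1}, which is essentially its matrix-level counterpart. Write $A=A(G)$ and $n=|V(G)|$. Since $\nul(G)=1$, we have $\nulo(G)=\langle z\rangle$ for some $z\neq 0$, and because every kernel vector is a scalar multiple of $z$, a vertex $v$ belongs to $\Supp(G)$ if and only if $z_v\neq 0$; in particular $s\in\Supp(G)$ forces $z_s\neq 0$.

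Next I would check the two hypotheses needed to invoke \Cref{basis.Im(A).null=1} for $A$. First, every column $A_{*j}$ is nonzero, since $G$ has no isolated vertices. Second, $|\Supp(G)|\geq 2$: if we had $\Supp(G)=\{s\}$, then $z=z_s e_s$ with $z_s\neq 0$, and $Az=z_s A_{*s}=0$ would give $A_{*s}=0$, i.e.\ $s$ isolated, a contradiction. Now \Cref{basis.Im(A).null=1} applied with the index $s$ says that
\[ \{A_{*j}:j\in\Supp(G),\ j\neq s\}\ \cup\ \{A_{*j}:j\notin\Supp(G)\} \]
is a basis for $\im(A)$. Since $s\in\Supp(G)$, this set is exactly $\{A_{*j}:j\neq s\}=B_s$, and $\im(A)=\im(G)$, so $B_s$ is a basis for $\im(G)$, as claimed.

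A second, more self-contained route avoids the earlier lemmas altogether: from $Az=0$ and $z_s\neq 0$ one gets $A_{*s}=-z_s^{-1}\sum_{j\neq s}z_j A_{*j}\in\langle B_s\rangle$, hence $\langle B_s\rangle=\langle\{A_{*j}:j\in V(G)\}\rangle=\im(G)$; since $\dim(\im(G))=\rank(G)=n-\nul(G)=n-1=|B_s|$, a spanning family of the correct size is automatically a basis. I do not anticipate a real obstacle here: the only points worth a sentence are the identification $\Supp(G)=\{v:z_v\neq 0\}$ (immediate once $\nul(G)=1$) and the fact that ``$G$ has no isolated vertices'' is used \emph{twice} — to make the columns of $A$ nonzero and to exclude $|\Supp(G)|=1$ — so that \Cref{basis.Im(A).null=1} applies verbatim.
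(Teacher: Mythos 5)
Your proposal is correct and follows essentially the same route as the paper: verify the hypotheses of \Cref{basis.Im(A).null=1} (columns nonzero since $G$ has no isolated vertices, and $|\Supp(G)|\geq 2$ by the same contradiction argument with a single supported vertex forcing a zero column), then apply that lemma with index $s$. Your alternative self-contained argument (expressing $A_{*s}$ from the kernel relation and counting $|B_s|=n-1=\rank(G)$) is also sound, but it is only a compressed version of what the cited lemmas already do, so nothing new is needed.
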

This theorem provides a very concrete and useful result: for any graph with nullity 1 and no isolated vertices, you can obtain a basis for the column space by simply removing any one column corresponding to a supported vertex. 
\begin{proof}
	Since $G$ has no isolated vertices, $A(G)_{*j} \neq 0$ for all $j \in V(G)$. If $\Supp(G)=\{u\}$ for some $u$, then $\nulo(G)=\langle e_u\rangle$. But, $0=A(G)e_u=A(G)_{*u}$, contradicting that all the columns of $A(G)$ are nonzero. Therefore, $|\Supp(G)| \geq 2$, and so we can apply \Cref{basis.Im(A).null=1}.
\end{proof}


\begin{lemma}
	\label{lemma.ker(P^t)=ker(R^t)}
	Let $Sp(K,S)$ be a split graph such that $\cliqueker{Sp}\neq \{0\}$,	and let $P$ be the submatrix of $R$ of size $|K|\times|\rank(R)|$ whose columns form a basis for $\im(R)$. Let $S'$ be the set of vertices associated with the columns of $P$. If $Sp'(K,S')$ is the split graph whose biadjacency matrix is $P$, then 
	\[ \cliqueker{Sp}=\cliqueker{Sp'} \]
	and $\nul(Sp')=1$.  
\end{lemma}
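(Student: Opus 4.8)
The plan is to deduce the equality of clique-kernels directly from the definition $\cliqueker{Sp}=\nulo(R^t)\cap\im((I-J)^{-1}R)$, and then read off $\nul(Sp')=1$ from the results of \Cref{sec:supp.clique}. Before anything, I would record two sanity checks: since $\cliqueker{Sp}$ is defined we have $|K|\geq 2$, and since $\cliqueker{Sp}\neq\{0\}$ we have $R\neq 0$ (exactly as at the start of the proof of \Cref{lemma.clique.support}), so $\rank(R)\geq 1$ and therefore $S'\neq\varnothing$. Hence $Sp'(K,S')$ is a genuine split graph --- in fact the induced subgraph $Sp[K\cup S']$ --- with biadjacency matrix $P$ and clique of size $|K|\geq 2$, so $\cliqueker{Sp'}$ is well defined.

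For the first assertion I would transfer the two building blocks of the clique-kernel one at a time. Because the columns of $P$ form a basis of $\im(R)$, we have $\im(P)=\im(R)$; taking orthogonal complements gives $\nulo(P^t)=\im(P)^{\perp}=\im(R)^{\perp}=\nulo(R^t)$. Since $|K|\geq 2$, the matrix $I-J$ is invertible, so $\im((I-J)^{-1}P)=(I-J)^{-1}\im(P)=(I-J)^{-1}\im(R)=\im((I-J)^{-1}R)$. Intersecting these two equalities yields $\cliqueker{Sp'}=\cliqueker{Sp}$.

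For the second assertion, note that $\cliqueker{Sp'}=\cliqueker{Sp}\neq\{0\}$, so \Cref{lemma.clique.support} gives $\dim(\cliqueker{Sp'})=1$, and then \Cref{clique.supp.characterization} gives $\Supp(Sp')\cap K\neq\varnothing$. By \Cref{clique.supp.nullity}, $\nul(Sp')=\nul(P)+1$. Finally $P$ has linearly independent columns by construction, so $\rank(P)$ equals the number of columns of $P$, and thus $\nul(P)=0$; hence $\nul(Sp')=1$. I do not expect a genuine obstacle here: the only thing requiring care is the bookkeeping with the definitions (the identities $\nulo(P^t)=\nulo(R^t)$ and $\im((I-J)^{-1}P)=\im((I-J)^{-1}R)$, and the legitimacy of $Sp'$ as a split graph), since \Cref{lemma.clique.support}, \Cref{clique.supp.characterization} and \Cref{clique.supp.nullity} carry all the real weight.
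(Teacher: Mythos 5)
Your proof is correct and follows essentially the same route as the paper: the clique-kernel equality comes from $\im(P)=\im(R)$ and $\nulo(P^t)=\nulo(R^t)$, and $\nul(Sp')=1$ then follows from the supported-clique results of \Cref{sec:supp.clique} together with $\nulo(P)=\{0\}$ (the paper invokes \Cref{kernel.split.basic.facts}(3) and \Cref{null(S)=1.iff.ker(R)=0}, which is the same machinery you use via \Cref{lemma.clique.support}, \Cref{clique.supp.characterization} and \Cref{clique.supp.nullity}). Your added sanity checks on $|K|\geq 2$ and $S'\neq\varnothing$ are a harmless refinement, not a different argument.
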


\begin{proof}
	Since $P$ is obtained by selecting a basis for $\im(R)$, we have $\im(P) = \im(R)$ (hence, $\im((I-J)^{-1}P)=\im((I-J)^{-1}R)$) and
	$\nulo(P^t) = \nulo(R^t)$. Therefore, $\cliqueker{Sp} =\cliqueker{Sp'}$. Since $\cliqueker{Sp}\neq \{0\}$, by \Cref{kernel.split.basic.facts}(3), we have $\Supp(Sp')$ $\cap K\neq\varnothing$. Since $\nulo(P)=\{0\}$, applying \Cref{null(S)=1.iff.ker(R)=0} to $Sp'$ gives $\nul(Sp')=1$.
\end{proof}
%
%
%

\begin{theorem}
	\label{basis.im.ker.split.supp.clique}
	Let $Sp(K,S)$ be a split graph such that $\cliqueker{Sp}=\langle z\rangle$, for some $z\in\mathbb{R}^{|K|}\setminus 0$. Assume that the first $\rank(R)$ columns of $R$ are linearly independent and let $P$ be the submatrix of $R$ obtained by deleting the last $|S|-\rank(R)$ columns of $R$. Let \(S'\) be the set of vertices associated with the columns of \(P\). If $Sp'(K,S')$ is the induced split subgraph of $Sp$ whose biadjacency matrix is $P$, then we have the following:
	\begin{enumerate}
		\item $\Supp(Sp')\cap K=\Supp(Sp)\cap K$, \ $\Supp(Sp')\cap S'\subset\Supp(Sp)\cap S'$.
		\item For each $s\in\Supp(Sp')$, then
		\[ B_s=\{A(Sp)_{*j}:j\in V(Sp')\setminus s\} \] 
		is a basis for $\im(Sp)$.
		\item $\rank(Sp)=|V(Sp')|-1=\rank(R)+|K|-1$.
		\item If $y_0$ is the unique vector such that $(I-J)z=Py_0$, and $y_v$ is the unique vector such that $Py_v=R_{*v}$ for each $v\in W=V(Sp)\setminus V(Sp')$, then 
		\[ B=\{(z,y_0,0)^t\}\cup\{ (0,-y_v,e_v)^t:v\in W \} \]
		is a basis for $\nulo(Sp)$.   
	\end{enumerate}
\end{theorem}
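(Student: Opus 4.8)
The plan is to pass to the subgraph $Sp'$ and exploit that, by \Cref{lemma.ker(P^t)=ker(R^t)}, one has $\cliqueker{Sp'}=\cliqueker{Sp}=\langle z\rangle$ and $\nul(Sp')=1$ (note $|K|\ge 2$ is forced, since $\cliqueker{Sp}$ is defined). Order the vertices as $K,S',W$, where $W=V(Sp)\setminus V(Sp')=S\setminus S'$, and split $R=[\,P\mid Q\,]$ with $Q$ the biadjacency matrix between $K$ and $W$; then
\[
A(Sp)=\begin{pmatrix} J-I & P & Q\\ P^{t} & 0 & 0\\ Q^{t} & 0 & 0\end{pmatrix},\qquad
A(Sp')=\begin{pmatrix} J-I & P\\ P^{t} & 0\end{pmatrix}.
\]
Since the columns of $P$ form a basis of $\im(R)$, $P$ has full column rank, so the solutions $y_{0}$ of $Py_{0}=(I-J)z$ and $y_{v}$ of $Py_{v}=R_{*v}$ ($v\in W$) are unique; they exist because $z\in\im((I-J)^{-1}R)$ gives $(I-J)z\in\im(R)=\im(P)$ and $R_{*v}\in\im(R)=\im(P)$. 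A one-line check with \Cref{kernel.split}(1) (using $J-I=-(I-J)$ and $R^{t}z=0$) shows $(z,y_{0})^{t}\in\nulo(Sp')$; since $z\neq 0$ and $\nul(Sp')=1$, in fact $\nulo(Sp')=\langle (z,y_{0})^{t}\rangle$, so $\Supp(Sp')$ is exactly the set of coordinates on which $(z,y_{0})^{t}$ is nonzero.

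For item (1), \Cref{kernel.split.basic.facts}(1)--(2) imply that the clique-parts of $\nulo(Sp)$ and of $\nulo(Sp')$ both fill out $\langle z\rangle$, whence $\Supp(Sp)\cap K=\{v\in K:z_{v}\neq 0\}=\Supp(Sp')\cap K$. For the $S'$-inclusion, given $v\in\Supp(Sp')\cap S'$ pick $(x_{K},x_{S'})^{t}\in\nulo(Sp')$ with $(x_{S'})_{v}\neq 0$; then $x_{K}\in\langle z\rangle\subset\nulo(R^{t})$, so $(x_{K},x_{S'},0)^{t}\in\nulo(Sp)$ (its image under $A(Sp)$ has top block $(I-J)x_{K}-Px_{S'}=0$ and $R^{t}x_{K}=0$), placing $v$ in $\Supp(Sp)$. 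Item (3) is then immediate: $\Supp(Sp)\cap K\neq\varnothing$ lets us apply \Cref{clique.supp.nullity}, so $\rank(Sp)=|K|+|S|-\nul(Sp)=|K|+|S|-\nul(R)-1=|K|+\rank(R)-1=|V(Sp')|-1$.

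For item (2) I would first note that $\im(Sp)$ is already spanned by the columns indexed by $V(Sp')$: for $v\in W$ the column $A(Sp)_{*v}=(R_{*v},0,0)^{t}$ is a combination of the columns $A(Sp)_{*k}=(P_{*k},0,0)^{t}$, $k\in S'$, because $R_{*v}\in\im(P)$. Next, the kernel vector $(z,y_{0},0)^{t}$ of $Sp$ yields the relation $\sum_{j\in K}z_{j}A(Sp)_{*j}+\sum_{k\in S'}(y_{0})_{k}A(Sp)_{*k}=0$; for $s\in\Supp(Sp')$ the coefficient of $A(Sp)_{*s}$ is $z_{s}$ (if $s\in K$) or $(y_{0})_{s}$ (if $s\in S'$), both nonzero since $\Supp(Sp')$ is the support of $(z,y_{0})^{t}$. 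Hence $A(Sp)_{*s}\in\langle B_{s}\rangle$, so $\langle B_{s}\rangle=\im(Sp)$; as $|B_{s}|=|V(Sp')|-1=\rank(Sp)$ by item (3), $B_{s}$ is a basis. For item (4), a direct block computation shows each listed vector lies in $\nulo(Sp)$: for $(z,y_{0},0)^{t}$ use $(I-J)z=Py_{0}$ and $R^{t}z=0$; for $(0,-y_{v},e_{v})^{t}$ the top block is $-Py_{v}+Q e_{v}=-R_{*v}+R_{*v}=0$ and the lower blocks vanish. Linear independence follows by reading the $W$-coordinates (the $e_{v}$ are independent, then $z\neq 0$ kills the remaining coefficient), and $|B|=1+|W|=1+\nul(R)=\nul(Sp)$ by \Cref{clique.supp.nullity}, so $B$ spans $\nulo(Sp)$ and hence is a basis.

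The only real obstacle is bookkeeping — tracking the three vertex blocks $K,S',W$ through the block-matrix identities and, in item (2), cleanly splitting the ``coefficient is nonzero'' step into the clique case and the independent case, both of which hinge on $(z,y_{0})^{t}$ actually spanning the one-dimensional $\nulo(Sp')$. Conceptually everything is forced once \Cref{lemma.ker(P^t)=ker(R^t)} (supplying $\nul(Sp')=1$) and \Cref{clique.supp.nullity} (supplying $\nul(Sp)=\nul(R)+1$) are invoked; the rest is an unwinding of the one-dimensionality of the clique-kernel.
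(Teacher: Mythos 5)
Your proof is correct, and it takes a slightly different route from the paper's in items (2)--(3). The paper first proves item (2) by establishing linear independence of $B_s$ directly: it observes that the matrix formed by the columns in $B_s$ contains $A(Sp'-s)$ as a submatrix, which is nonsingular because $\nul(Sp')=1$ and $s\in\Supp(Sp')$ (via \Cref{G.nul=1.adj.G-v.supp.x_v^2}), and only then deduces item (3) by counting $|B_s|$. You instead prove item (3) first, combining \Cref{clique.supp.nullity} with rank--nullity to get $\rank(Sp)=|K|+\rank(R)-1$, and then obtain item (2) by showing $B_s$ spans $\im(Sp)$ (using that $\Supp(Sp')$ is exactly the support of $(z,y_0)^t$, which spans the one-dimensional $\nulo(Sp')$) and concluding with the cardinality-equals-dimension count. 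This avoids \Cref{G.nul=1.adj.G-v.supp.x_v^2} and the adjugate machinery altogether, at the cost of leaning more heavily on \Cref{clique.supp.nullity}; the paper's version has the small side benefit of exhibiting an explicit nonsingular submatrix witnessing the rank. Your treatment of item (4) (direct membership check, independence read off the $W$-coordinates, then dimension count) is essentially what the paper means by ``repeating the argument of \Cref{clique.supp.nullity}'', just spelled out more explicitly, and your justification of the existence and uniqueness of $y_0$ and the $y_v$ via the full column rank of $P$ is a welcome detail the paper leaves implicit.
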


\begin{proof}
	\begin{enumerate}[(1)]
		\item The equality $\Supp(Sp')\cap K=\Supp(Sp)\cap K$ can easily be derived from \Cref{lemma.ker(P^t)=ker(R^t)}. To obtain that $\Supp(Sp')\cap S'\subset\Supp(Sp)\cap S'$, recall that, by item (2) of \Cref{kernel.split.basic.facts} and item (1) of \Cref{kernel.split}, there exists a (unique) vector $y_0$ such that $(I-J)^{-1}Py_0=z$. Then, 
		\[ \Supp(Sp')\cap S'=\{u\in S':(y_0)_u\neq 0\}. \]
		On the other hand, we have $(I-J)^{-1}R(y_0,0)^t=z$ as well, which means that $(z,y_0,0)^t\in\nulo(Sp)$. Hence, 
		\[ \{u\in S':(y_0)_u\neq 0\}\subset\Supp(Sp)\cap S'. \]
		
		\item Let $M$ be the $|V(Sp)|\times(|V(Sp')|-1)$ matrix obtained from $A(Sp)$ by deleting its last $|S|-\rank(R)$ columns and the column $s$. Since $\rank(M)$ is the size of its largest invertible submatrix, we have that $\rank(M)=|V(Sp')|-1$ because $M$ contains $A(Sp'-s)$, which is nonsingular by \Cref{lemma.ker(P^t)=ker(R^t)} and \Cref{G.nul=1.adj.G-v.supp.x_v^2}. Hence, $B_s$ is a linearly independent set. 
		
		Obviously, $\langle B_s\rangle\subset\im(Sp)$. Since $s\in\Supp(Sp)$ by (1), there exists $x\in\nulo(Sp)$ such that $x_s\neq 0$ and $x_v=0$ for all $v\in V(Sp)\setminus V(Sp')$ (recall the proof of (1)). Therefore, we can solve for $A(Sp)_{*s}$ from the equality
		\[ 0=\sum_{j\in V(Sp)}x_jA(Sp)_{*j}=x_sA(Sp)_{*s}+\sum_{j\in V(Sp')\setminus s}x_jA(Sp)_{*j}, \]
		and show that $A(Sp)_{*s}\in\langle B_s\rangle$. For each $v\in V(Sp)\setminus V(Sp')$, it is clear by definition of $P$ that $A(Sp)_{*v}$ can be written as a nontrivial linear combination of elements in $\{A(Sp)_{*j}:j\in S'\}$ (if $s\in S'$, recall that $A(Sp)_{*s}\in\langle B_s\rangle$). Hence, $\langle B_s\rangle=\im(Sp)$.
		
		\item By (2), $\rank(Sp)=|B_s|=|V(Sp')|-1$. Moreover, $|V(Sp')|=|S'|+|K|$ and $|S'|=\rank(R)$, by the definition of $Sp'$. 
		
		\item Since 
		\[ 0=P(-y_v)+1R_{*v}+\sum_{j\in W\setminus v}0R_{*j}=R(-y_v,e_v)^t, \]
		we see that $(-y_v,e_v)^t\in\nulo(R)$. Note that $\{(-y_v,e_v):v\in W\}$ is clearly a basis for $\nulo(R)$ (recall that $|W|=\nul(R)$). Consequently, we get the desired result by repeating the same argument used in the proof of \Cref{clique.supp.nullity}. \qedhere
	\end{enumerate}
\end{proof}
%
%
%


Clearly, the vector $z$ constructed in \Cref{basis.im.ker.split.supp.clique} can always be chosen with integer coordinates such that $\gcd\{z_i:z_i\neq 0\}= 1$. In such a case, observe that $z$ becomes, up to multiplication by $-1$, an interesting algebraic invariant of split graphs with supported clique vertices.

%
%
%
%
%
%

\section{Tyshkevich composition}\label{sec:tyshk.null}

Understanding how the nullspace behaves under graph operations is a natural and important question. In this section, we analyze how the nullspace of a split graph interacts with the Tyshkevich composition, a well-known product that builds a larger graph by fully joining the clique of a split graph to an arbitrary second graph. 

If $Sp(K,S)$ is a split graph and $G$ is a graph disjoint from $Sp$, the \emph{Tyshkevich composition} $Sp\circ G$ of $Sp$ and $G$ is defined as the graph whose vertex set is 
\[ V(Sp\circ G)=V(Sp)\cup V(G), \]
and whose edge set is 
\[ E(Sp\circ G)=E(Sp)\cup E(G)\cup\{xy:x\in K,y\in V(G)\}. \]
This operation was introduced by R. Tyshkevich in \cite{tyshkevich2000decomposition}. In Figure \ref{ejemplo.composicion} we can see an example of this composition.

This operation is relevant for studying the nullspace of split graphs, as it allows us to build more complex split graphs from simpler components while potentially controlling spectral properties.

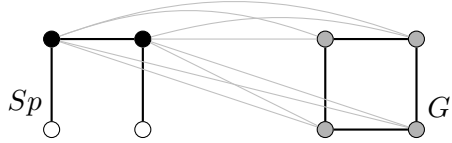
\begin{figure}[h]
	\label{ejemplo.composicion}
	\centering
	\begin{tikzpicture}[scale=1.2, every node/.style={circle, draw, minimum size=6pt, inner sep=2pt}]
		
		\node[fill=white, label=above left:$Sp$] (v1) at (0,0) {};    
		\node[fill=black] (v2) at (0,1) {};    
		\node[fill=black] (v3) at (1,1) {};    
		\node[fill=white] (v4) at (1,0) {};    
		
		\draw[thick] (v1) -- (v2) -- (v3) -- (v4);
		
		\node[fill=gray!60] (g1) at (3,0) {};
		\node[fill=gray!60, label=above right:$G$] (g2) at (4,0) {};
		\node[fill=gray!60] (g3) at (4,1) {};
		\node[fill=gray!60] (g4) at (3,1) {};
		
		\draw[thick] (g1) -- (g2) -- (g3) -- (g4) -- (g1);
		
		\foreach \k in {v2,v3} {
			\foreach \g in {g1,g2} {
				\draw [color=gray!50](\k) -- (\g);
			}
		}
		\draw[color=gray!50] (v3)--(g4);
		\draw[color=gray!50, bend left=15] (v3) to (g3);
		\draw[color=gray!50, bend left=20] (v2) to (g3);
		\draw[color=gray!50, bend left=20] (v2) to (g4);

		%
		
	\end{tikzpicture}
	\caption{Tyshkevich composition of $Sp\approx P_4$ and $G\approx C_4$.}
\end{figure}
%
%
%
 The Tyshkevich composition preserves (and potentially amplifies) the singularity coming from the biadjacency matrix $R$ of the split graph. 
\begin{theorem}
	\label{thm:tyshkevich.nullspace.containment}
	Let $Sp(K,S)$ be a split graph, and let $G$ be a graph. Then, 
	\[ \nulo(Sp\circ G)\supseteq\{ (0,z,0)^t:z\in\nulo(R) \}. \]
	In particular, if $\nulo(R)\neq \{0\}$, then $Sp\circ G$ is singular.
\end{theorem}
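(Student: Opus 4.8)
The plan is to make the block structure of $A(Sp\circ G)$ explicit and then verify the inclusion by a one-line matrix–vector computation. Ordering the vertices of $Sp\circ G$ as $(K,S,V(G))$, the definition of the Tyshkevich composition yields
\[
A(Sp\circ G)=\begin{pmatrix} J-I & R & J \\ R^{t} & 0 & 0 \\ J^{t} & 0 & A(G) \end{pmatrix},
\]
where the $(1,3)$ block is the all-ones matrix because every vertex of $K$ is joined to every vertex of $G$, and the $(2,3)$ block vanishes because $\circ$ introduces no edges between $S$ and $V(G)$.

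First I would fix an arbitrary $z\in\nulo(R)$ and set $w=(0,z,0)^{t}$, partitioned conformally with the display above (the two zero vectors having lengths $|K|$ and $|V(G)|$). Reading off the three block-rows of $A(Sp\circ G)\,w$, the first equals $(J-I)0+Rz+J0=Rz=0$, the second equals $R^{t}0+0z+0\cdot 0=0$, and the third equals $J^{t}0+0z+A(G)0=0$. Hence $A(Sp\circ G)\,w=0$, so $w\in\nulo(Sp\circ G)$; since $z$ was an arbitrary element of $\nulo(R)$, the claimed containment follows. The ``in particular'' statement is then immediate: if $\nulo(R)\neq\{0\}$, choosing a nonzero $z\in\nulo(R)$ produces a nonzero $w=(0,z,0)^{t}\in\nulo(Sp\circ G)$, whence $\nul(Sp\circ G)\ge 1$ and $Sp\circ G$ is singular.

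I do not expect any real obstacle here; the only point that needs care is the zero $(2,3)$-block, which is exactly the feature of the Tyshkevich composition that lets the action of $A(Sp\circ G)$ on $w$ decouple into the single condition $Rz=0$. If $G$ is itself split, one could alternatively observe that $Sp\circ G$ is again a split graph and deduce the statement from \Cref{kernel.split}(1), but the direct block computation above has the advantage of requiring no hypothesis on $G$.
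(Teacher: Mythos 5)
Your proof is correct and follows essentially the same route as the paper: write $A(Sp\circ G)$ in block form under the ordering $(K,S,V(G))$ and verify directly that $A(Sp\circ G)(0,z,0)^t=(Rz,0,0)^t=0$ for $z\in\nulo(R)$. The block-by-block verification and the remark about the vanishing $(2,3)$-block match the paper's argument, so no further comment is needed.
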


\begin{proof}
	Let $H=Sp\circ G$. By using the ordering $(K,S,V(G))$ for $V(H)$, we see that   
	\[
	A(H) = 
	\begin{pmatrix}
		J-I & R & J \\
		R^t & 0 & 0 \\
		J^t & 0 & A(G)
	\end{pmatrix}.
	\]
	Now, observe that $A(H)(0,x_S,0)^t=(Rx_S,0,0)^t$. Therefore, if $x_S\in\nulo(R)$, then $(0,x_S,0)^t\in\nulo(H)$.
\end{proof}
%
%
%

\begin{theorem}
	\label{thm:tyshkevich.nonsingular.equivalence}
	Let $Sp(K,S)$ and $Sp'(K',S')$ be split graphs with $|K|=|S|$ and $|K'|=|S'|$. Then, $Sp\circ Sp'$ is nonsingular if and only if $Sp$ and $Sp'$ are both nonsingular.
\end{theorem}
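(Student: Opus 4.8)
The plan is to observe that $H:=Sp\circ Sp'$ is itself a split graph in which the two clique‑sides merge and the two independent‑sides merge, and then to reduce the question to the nonsingularity of a block‑triangular biadjacency matrix via \Cref{|K|<|I|.singular}(2).

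First I would identify an s‑partition for $H$. In $Sp\circ Sp'$ every vertex of $K$ is adjacent to every vertex of $V(Sp')=K'\dot{\cup}S'$, while the vertices of $S$ acquire no new neighbours, so they remain adjacent only to vertices of $K$. Hence $K\dot{\cup}K'$ induces a clique of $H$ and $S\dot{\cup}S'$ induces an independent set, so $(K\dot{\cup}K',\,S\dot{\cup}S')$ is an s‑partition for $H$. Because $|K|=|S|$ and $|K'|=|S'|$, we get $|K\dot{\cup}K'|=|K|+|K'|=|S|+|S'|=|S\dot{\cup}S'|$, so $H$ falls under the balanced‑cardinality case.

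Next, ordering $V(H)$ as $(K,K',S,S')$, I would read off the biadjacency matrix $\widetilde R$ of $H$ block by block: the $K$–$S$ block is $R$; the $K$–$S'$ block is the all‑ones matrix $J$ (full join); the $K'$–$S$ block is $0$ (there are no $K'$–$S$ edges); and the $K'$–$S'$ block is $R'$. Thus
\[
\widetilde R=\begin{pmatrix} R & J \\ 0 & R' \end{pmatrix},
\]
which is block upper triangular with square diagonal blocks $R$ (order $|K|=|S|$) and $R'$ (order $|K'|=|S'|$), so $\det\widetilde R=\det(R)\det(R')$. Applying \Cref{|K|<|I|.singular}(2) to $H$ (legitimate since $|K\dot{\cup}K'|=|S\dot{\cup}S'|$): $H$ is nonsingular $\iff$ $\widetilde R$ is nonsingular $\iff$ $\det(R)\neq 0$ and $\det(R')\neq 0$. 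Applying \Cref{|K|<|I|.singular}(2) again to $Sp$ and to $Sp'$ converts these into "$Sp$ nonsingular" and "$Sp'$ nonsingular", which yields the claimed equivalence.

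I do not expect a genuine obstacle here; the only point that needs a moment's care is verifying that the $K'$–$S$ block of $\widetilde R$ is truly zero, and this is exactly the observation that in $Sp\circ Sp'$ the independent vertices of $Sp$ gain no edges to $V(Sp')$ and were never adjacent to it. Everything else is the standard determinant of a block‑triangular matrix together with the two cited corollaries.
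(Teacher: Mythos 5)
Your proposal is correct and is essentially the paper's own proof: both identify $(K\dot{\cup}K',\,S\dot{\cup}S')$ as an s-partition of $Sp\circ Sp'$, order the vertices as $(K,K',S,S')$ to obtain the block upper triangular biadjacency matrix $\begin{pmatrix} R & J \\ 0 & R' \end{pmatrix}$, and apply \Cref{|K|<|I|.singular}(2) three times together with $\det = \det(R)\det(R')$. No gaps.
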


\begin{proof}
	Since $(K\cup K', S\cup S')$ is an s-partition for $H=Sp\circ Sp'$, we can label $V(H)$ following the ordering $(K,K',S,S')$, and write
	\[ A(H)=
	\begin{pmatrix}
		J-I & P \\
		P^t & 0
	\end{pmatrix},
	\quad P=
	\begin{pmatrix}
		R & J\\
		0 & R'
	\end{pmatrix}.
	\]
	Since $|K\cup K'|=|S\cup S'|$, we know by item (2) of \Cref{|K|<|I|.singular} that $A(H)$ is nonsingular if and only if 
	\[ \det(P) = \det(R)\det(R')\neq 0, \]
	because $P$ is block-triangular. Hence, $H$ is nonsingular if and only if $R$ and $R'$ are both nonsingular. Now, apply once again item (2) of \Cref{|K|<|I|.singular} to conclude that this occurs if and only if $Sp$ and $Sp'$ are both nonsingular.
\end{proof}


\section{Determinant of split graphs}\label{sec:det.split}
 
Thanks to the block structure of their adjacency matrices, split graphs admit explicit combinatorial determinant formulas that can be obtained with the Schur complement and Cauchy's formula.
 
 In this section, we establish a precise formula for \(\det(Sp)\) (i.e., $\det(A(Sp))$) in terms of the biadjacency matrix \(R\), providing a direct computational method to determine the singularity of a split graph and revealing how the determinant is influenced by the interplay between the clique and the independent set.
\begin{theorem}[Schur complement]
	\label{schur.complement}
	Let
	\[ M=
	\begin{pmatrix}
		A & B \\
		C & D
	\end{pmatrix}
	\]
	be a block matrix, where $A$ and $D$ are square matrices (of possibly different sizes). If $A$ is invertible, then
	\[ \det(M) = \det(A)\det(D-CA^{-1}B). \]
\end{theorem}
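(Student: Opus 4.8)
The plan is to prove the identity by an explicit block factorization together with the multiplicativity of the determinant. Since $A$ is invertible, the Schur complement $D-CA^{-1}B$ is well defined, and one verifies by direct block multiplication that
\[
\begin{pmatrix} A & B \\ C & D \end{pmatrix}
=
\begin{pmatrix} I & 0 \\ CA^{-1} & I \end{pmatrix}
\begin{pmatrix} A & B \\ 0 & D-CA^{-1}B \end{pmatrix},
\]
where the two identity blocks have the sizes of $A$ and $D$ respectively: the $(1,1)$ and $(1,2)$ blocks of the product are $A$ and $B$, the $(2,1)$ block is $CA^{-1}A=C$, and the $(2,2)$ block is $CA^{-1}B+(D-CA^{-1}B)=D$.

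First I would take determinants on both sides and invoke $\det(XY)=\det(X)\det(Y)$, which reduces the problem to computing the determinant of each triangular factor. For this I would use the block-triangular determinant identity
\[
\det\begin{pmatrix} P & Q \\ 0 & R \end{pmatrix}=\det(P)\,\det(R)\qquad\text{($P,R$ square),}
\]
together with its transpose version for block lower-triangular matrices. The first factor then has determinant $\det(I)\det(I)=1$, and the second has determinant $\det(A)\det(D-CA^{-1}B)$; multiplying yields $\det(M)=\det(A)\det(D-CA^{-1}B)$, as claimed. (When this theorem is applied to a split graph $Sp(K,S)$ with $|K|\geq 2$, the upper-left block $A=J-I$ is indeed invertible, so the hypothesis is met.)

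The only ingredient that is not pure block-matrix bookkeeping is the block-triangular determinant identity, and this is where the genuine work lies. I would prove it by induction on the order of $P$: expanding $\det\begin{pmatrix} P & Q \\ 0 & R\end{pmatrix}$ along its first column, the only rows contributing nonzero cofactors are those of $P$, since the rows below $P$ begin with a zero block; deleting such a row and the first column leaves a matrix of the same block-triangular shape, whose upper-left block is the corresponding minor of $P$ and whose lower-right block is still $R$. By the induction hypothesis each of these minor-determinants equals that minor of $P$ times $\det(R)$, so the Laplace expansion collapses to $\det(R)$ times the first-column cofactor expansion of $\det(P)$, i.e.\ $\det(P)\det(R)$; the base case of order-zero (or order-one) $P$ is immediate. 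Alternatively, one may simply cite this as a standard fact. No obstacle beyond this step remains; everything else is routine block arithmetic.
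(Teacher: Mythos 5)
Your proof is correct. Note that the paper itself gives no proof of this statement: it is recorded as a classical fact (alongside the matrix determinant lemma) and used as a tool in \Cref{det.split}, so there is nothing to compare against on the paper's side. Your argument is the standard one: the block factorization
\[
\begin{pmatrix} A & B \\ C & D \end{pmatrix}
=
\begin{pmatrix} I & 0 \\ CA^{-1} & I \end{pmatrix}
\begin{pmatrix} A & B \\ 0 & D-CA^{-1}B \end{pmatrix}
\]
is verified correctly, the multiplicativity of the determinant applies, and your inductive Laplace-expansion proof of the block-triangular determinant identity (only rows of $P$ contribute in the first-column expansion, and each cofactor retains the same block shape with $R$ intact) is sound, with the lower-triangular factor handled via transposition. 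The closing remark that $A=J-I$ is invertible when $|K|\geq 2$ correctly confirms the hypothesis in the paper's application.
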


\begin{theorem}[Cauchy's formula, \cite{horn2012matrix}]
	\label{matrix.det.lemma}
	If $M$ is a square matrix and $u,v$ are vectors, then 
	\[ \det(M+uv^t)=\det(M)+v^t\adj(M)u. \]
\end{theorem}

\begin{theorem}
	\label{det.split}
	Let $Sp(K,S)$ be a split graph with $|K|=k\geq 2$. If 
	\[ A(Sp)= 
	\begin{pmatrix}
		J-I & R \\
		R^t & 0
	\end{pmatrix},
	\]
	and $r=R^t\unovec$, then
	\[ \det(Sp) = (-1)^{k-1}\left( (k-1)\det(R^tR)-r^t\adj(R^tR)r \right). \]
\end{theorem}

\begin{proof}
	We apply the Schur complement (\Cref{schur.complement}) with 
	
	$A=J-I$, $B=R$, $C=R^t$, and $D=0$:
	\[ \det(Sp) = \det(J-I)\det(0 - R^t(J-I)^{-1}R) =\] 
	\[ (-1)^{k-1}(k-1)\det(-R^t(J-I)^{-1}R). \]
	Using the well known Sherman--Morrison formula, $(I-J)^{-1}=I-(k-1)^{-1}J$, we have:
	\[ R^t(J-I)^{-1}R = -R^t(I-J)^{-1}R = -R^t\left(I - \frac{1}{k-1}J\right)R =\] 
	\[ -R^tR + \frac{1}{k-1}R^tJ R. \]
	Note that $R^tJ R = (R^t\unovec)(\unovec^t R) = rr^t$. Therefore:
	\[ \det(Sp) = (-1)^{k-1}(k-1)\det\left(R^tR - \frac{rr^t}{k-1}\right). \]
	Finally, we apply \Cref{matrix.det.lemma} with $M=R^tR, u=\frac{r}{1-k}$ and $v=r$:
	\[ \det\left(R^tR - \frac{rr^t}{k-1}\right) = \det(R^tR) - \frac{r^t\adj(R^tR)r}{k-1}.\qedhere \]
\end{proof}

Notice that the vector $r=R^t\unovec$ in \Cref{det.split} is the degree sequence of the independent part of $Sp(K,S)$, i.e., $r_v=\deg_{Sp}(v)$, for each $v\in S$.

\begin{corollary}
	\label{cor:split.singularity.condition}
	Let $Sp(K,S)$ be a split graph such that $k=|K|\geq 2$. If $r=R^t\unovec$, then $Sp$ is singular if and only if
	\[ (k-1)\det(R^tR)=r^t\adj(R^tR)r. \]
	In particular:
	\begin{enumerate}
		\item if $\nulo(R)=\{0\}$, then $\det(Sp)=0$ if and only if $r^t(R^tR)^{-1}r=k-1$;
		
		\item if $\nulo(R)=\langle x\rangle$ for some $x\neq 0$, then $\det(Sp)=0$ if and only if $r^tx=0$; 
		
		\item if $\nul(R)\geq 2$, then $\det(Sp)=0$.
	\end{enumerate}
\end{corollary}
\begin{proof}
	The singularity characterization immediately follows from \Cref{det.split}. 
	\begin{enumerate}[(1).]
		\item Since $R^tR$ is invertible, we have $\adj(R^tR)=(R^tR)^{-1}\det(R^tR)$.
		
		\item Since $\nulo(R^tR)=\nulo(R)$, we have $\det(R^tR)=0$ and we can write $\adj(R^tR)=cxx^t$ for some $c\in\mathbb{R}\setminus 0$, by \Cref{null=1.adj(S).columns} and by the proof of \Cref{G.nul=1.adj.G-v.supp.x_v^2}. Therefore, $(k-1)0=r^t(cxx^t)r=c(r^tx)(r^tx)^t$. 
		\item Since $\nul(R^tR)=\nul(R)\geq 2$, we have $\adj(R^tR)=0$ by \Cref{lemma.adj.nonzero} and $\det(R^tR)=0$. \qedhere
	\end{enumerate}
\end{proof}

%
%
%
%
%
%

\section*{Acknowledgments}
This work was partially supported by Universidad Nacional de San Luis (Argentina), grant PROICO 03-0723, Agencia I+D+i, grant PICT-2020-Serie A-00549, CONICET, grant PIP 11220220100068CO. 

The authors would like to thank the anonymous referee for their valuable comments and suggestions, which significantly improved Sections 4 and 8.

\section*{Declaration of generative AI and AI-assisted technologies in the writing process}

During the preparation of this work, the authors used DeepSeek Chat and ChatGPT-3.5 to improve the grammatical flow and clarity of several paragraphs. Subsequent to using this service, the authors reviewed, refined, and verified the content and take full responsibility for the published work.
	
\bibliographystyle{abbrv}
\bibliography{citas__nullspace_of_split_graphs.bib}	

@article{FH76,
	title={On a class of matroid-producing graphs},
	author={S.~Földes and P.~L. Hammer},
	journal={Combinatorics (Proc.\ Fifth Hungarian Colloq., Keszthely, 1976). Colloq.\ Math.\ Soc.\ János Bolyai \textbf{18}},
	year={1978},
	volume={Vol.\ I},
	pages={331--352},
	publisher={North-Holland}
}

@article{FH77a,
	title={Split graphs},
	author={S.~F\"oldes and P.~L. Hammer},
	journal={Proceedings of the Eighth Southeastern Conference on Combinatorics, Graph Theory and Computing (Louisiana State Univ., Baton Rouge. Congr.\ Numer.\ \textbf{19}},
	year={1977},
	pages={311--315}
}

@article{FH77b,
	title={Split graphs with distinct representative properties},
	author={S.~F\"oldes and P.~L. Hammer},
	journal={Combinatorics (Proc.\ Fifth Hungarian Colloq., Keszthely, 1976) Colloq.\ Math.\ Soc.\ János Bolyai \textbf{18}},
	year={1976},
	volume={Vol.\ II},
	pages={345--356},
	publisher={North-Holland}
}

@article{ali2016coalescing,
  title={Coalescing Fiedler and core vertices},
  author={Ali, Didar A and Gauci, John Baptist and Sciriha, Irene and Sharaf, Khidir R},
  journal={Czechoslovak Mathematical Journal},
  volume={66},
  pages={971--985},
  year={2016},
  publisher={Springer}
}

@article{foldes1977split,
  title={Split graphs having Dilworth number two},
  author={Foldes, St{\'e}phane and Hammer, Peter L},
  journal={Canadian Journal of Mathematics},
  volume={29},
  pages={666--672},
  year={1977},
  publisher={Cambridge University Press}
}

@book{golumbic2004algorithmic,
  title={Algorithmic graph theory and perfect graphs},
  author={Golumbic, Martin Charles},
  volume={57},
  year={2004},
  publisher={Elsevier}
}

@article{hammer1981splittance,
  title={The splittance of a graph},
  author={Hammer, Peter L and Simeone, Bruno},
  journal={Combinatorica},
  volume={1},
  pages={275--284},
  year={1981},
  publisher={Springer}
}

@book{horn2012matrix,
  title={Matrix analysis},
  author={Horn, Roger A and Johnson, Charles R},
  year={2012},
  publisher={Cambridge university press}
}

@article{jaume2018null,
  title={Null decomposition of trees},
  author={Jaume, Daniel A and Molina, Gonzalo},
  journal={Discrete Mathematics},
  volume={341},
  pages={836--850},
  year={2018},
  publisher={Elsevier}
}

@article{jaume2021null,
  title={Null decomposition of bipartite graphs without cycles of length 0 modulo 4},
  author={Jaume, Daniel A and Molina, Gonzalo and Pastine, Adri{\'a}n},
  journal={Linear Algebra and its Applications},
  volume={614},
  pages={176--196},
  year={2021},
  publisher={Elsevier}
}

@book{mahadev1995threshold,
  title={Threshold graphs and related topics},
  author={Mahadev, Nadimpalli VR and Peled, Uri N},
  volume={56},
  year={1995},
  publisher={Elsevier}
}

@article{sciriha2007characterization,
  title={A characterization of singular graphs},
  author={Sciriha, Irene},
  journal={Electronic Journal of Linear Algebra},
  pages={451--462},
  year={2007},
  publisher={International Linear Algebra Society}
}

@article{sciriha1998construction,
  title={On the construction of graphs of nullity one},
  author={Sciriha, Irene},
  journal={Discrete Mathematics},
  volume={181},
  pages={193--211},
  year={1998},
  publisher={Elsevier}
}

@article{sciriha2009maximal,
  title={Maximal core size in singular graphs},
  author={Sciriha, Irene},
  journal={ARS Mathematica Contemporanea},
  pages={217--229},
  year={2009}
}

@article{sciriha2011spectrum,
  title={On the spectrum of threshold graphs},
  author={Sciriha, Irene and Farrugia, Stephanie},
  journal={International Scholarly Research Notices},
  volume={2011},
  pages={108509},
  year={2011},
  publisher={Wiley Online Library}
}

@inproceedings{von1957spektren,
  title={Spektren endlicher grafen},
  author={Von Collatz, Lothar and Sinogowitz, Ulrich},
  booktitle={Abhandlungen aus dem Mathematischen Seminar der Universit{\"a}t Hamburg},
  volume={21},
  pages={63--77},
  year={1957},
  organization={Springer}
}

@article{tyshkevich2000decomposition,
  title={Decomposition of graphical sequences and unigraphs},
  author={Tyshkevich, Regina},
  journal={Discrete Mathematics},
  volume={220},
  pages={201--238},
  year={2000},
  publisher={Elsevier}
}

@article{barrus.west.A4,
  title={The ${A}\_4$ structure of a graph},
  author={Barrus, Michael D. and West, Douglas B.},
  journal={Journal of Graph Theory},
  volume={69},
  pages={97--113},
  year={2012},
  publisher={Wiley}
}

@article{panelo5316214core,
  title={On the Core-Nilpotent Decomposition of Threshold Graphs},
  author={Panelo, Cristian and Jaume, Daniel A and Machado Toledo, Maikon},
  journal={Available at SSRN 5316214},
  year={2024}
}

@phdthesis{whitman2020split,
  title={Split graphs, unigraphs, and Tyshkevich decompositions},
  author={Whitman, Rebecca},
  year={2020},
  school={Wellesley College}
}

@article{nordhausgaddumcheng2016split,
  title={Split graphs and Nordhaus--Gaddum graphs},
  author={Cheng, Christine and Collins, Karen L and Trenk, Ann N},
  journal={Discrete Mathematics},
  volume={339},
  pages={2345--2356},
  year={2016},
  publisher={Elsevier}
}
	
\end{document}